\newtheorem{thm}{Theorem}[section]
\newtheorem{prop}[thm]{Proposition}
\newtheorem{lem}[thm]{Lemma}
\newtheorem{cor}[thm]{Corollary}
\newtheorem*{obs*}{Observation}
\newenvironment{customthm}[1]
{\innercustomthm}
{\endinnercustomthm}
\theoremstyle{definition}
\newtheorem{exmp}[thm]{Example}
\theoremstyle{remark}
\newtheorem{rem}[thm]{Remark}
\numberwithin{equation}{section}
\newcommand{\R}{\mathbb{R}}  
\newcommand{\C}{\mathbb{C}}  
\newcommand{\Z}{\mathbb{Z}}  
\newcommand{\N}{\mathbb{N}}  
\newcommand{\PP}{\mathbb{P}}  
\begin{document}

\title[A Refinement of Hilbert's 1888 Theorem]{A Refinement of Hilbert's 1888 Theorem: Separating Cones along the Veronese Variety}
\author{Charu Goel}
\address{Department of Basic Sciences, Indian Institute of Information Technology, Nagpur, India}
\email{charugoel@iiitn.ac.in}
\author{Sarah Hess}
\address{Department of Mathematics and Statistics, University of Konstanz, Konstanz, Germany}
\email{Sarah.Hess@uni-konstanz.de}
\author{Salma Kuhlmann}
\address{Department of Mathematics and Statistics, University of Konstanz, Konstanz, Germany}
\email{Salma.Kuhlmann@uni-konstanz.de}
\begin{abstract}
	For $n,d\in\N$, the cone $\mathcal{P}_{n+1,2d}$ of positive semi-definite (PSD) $(n+1)$-ary $2d$-ic forms (i.e., homogeneous polynomials with real coefficients in $n+1$ variables of degree $2d$) contains the cone $\Sigma_{n+1,2d}$ of those that are representable as finite sums of squares (SOS) of $(n+1)$-ary $d$-ic forms. Hilbert's 1888 Theorem \cite{Hilbert} states that $\Sigma_{n+1,2d}=\mathcal{P}_{n+1,2d}$ exactly in the \textit{Hilbert cases} $(n+1,2d)$ with $n+1=2$ or $2d=2$ or $(3,4)$. For the \textit{non-Hilbert cases}, we examine in \cite{GHK}, a specific cone filtration
	\begin{equation} \label{abs:eq1} \Sigma_{n+1,2d}=C_0\subseteq \ldots \subseteq C_n \subseteq C_{n+1} \subseteq \ldots \subseteq C_{k(n,d)-n}=\mathcal{P}_{n+1,2d}\end{equation} along $k(n,d)+1-n$ projective varieties containing the Veronese variety via the Gram matrix method \cite{CLR}. Here, $k(n,d)+1$ is the dimension of the real vector space of $(n+1)$-ary $d$-ic forms. In particular, we compute the number $\mu(n,d)$ of \textit{strictly separating intermediate cones} (i.e., $C_i$ such that $\Sigma_{n+1,2d}\subsetneq C_i \subsetneq \mathcal{P}_{n+1,2d}$) for the cases $(3,6)$ and $(n+1,2d)_{n\geq 3,d=2,3}$.
	
	In this paper, firstly, we generalize our findings from \cite{GHK} to any non-Hilbert case by identifying each strict inclusion in (\ref{abs:eq1}). This allows us to give a refinement of Hilbert's 1888 Theorem by computing $\mu(n,d)$. The cone filtration from (\ref{abs:eq1}) thus reduces to a specific cone subfiltration \begin{equation}
    \Sigma_{n+1,2d}=C_0^\prime\subsetneq C_1^\prime \subsetneq \ldots \subsetneq C_{\mu(n,d)}^\prime \subsetneq C_{\mu(n,d)+1}^\prime=\mathcal{P}_{n+1,2d} \end{equation} in which each inclusion is strict. Secondly, we show that each $C_i^\prime$, and hence each strictly separating $C_i$, fails to be a spectrahedral shadow. 
\end{abstract}
\maketitle
\section{Introduction}
\subsection{General Overview}
It is crucial for both real algebraic geometry and polynomial optimization to be able to determine if a given positive semi-definite (PSD) $(n+1)$-ary $2d$-ic (i.e, a non-negative form in $n+1$ variables of degree $2d$) can be represented by a finite sum of squares (SOS) of half degree $d$ forms. In 1888, Hilbert \cite{Hilbert} classified all cases $(n+1,2d)$ for which any PSD $(n+1)$-ary $2d$-ic admits a SOS-representation. These are the \textit{Hilbert cases} $(2,2d)$, $(n+1,2)$ and $(3,4)$. In all other cases, the \textit{non-Hilbert cases}, Hilbert non-constructively verified the existence of PSD-not-SOS $(n+1)$-ary $2d$-ics by a reduction to the \textit{basic non-Hilbert cases} $(4,4)$ and $(3,6)$ via an argument which allows to increase the number of variables and the even degree considered while preserving the PSD-not-SOS property. In any non-Hilbert case, the cone $\Sigma_{n+1,2d}$ of SOS $(n+1)$-ary $2d$-ics is thus a proper subcone of the cone $\mathcal{P}_{n+1,2d}$ of PSD $(n+1)$-ary $2d$-ics. It was approximately 80 years after Hilbert's seminal paper that the first explicit examples of a PSD-not-SOS ternary sextic and a PSD-not-SOS quaternary quartic were given by Motzkin \cite{Motzkin} and Robinson \cite{Rob}, respectively. Soon thereafter, Choi and Lam \cite{CL2, CL} introduced examples of a PSD-not-SOS ternary sextic and a PSD-not-SOS quaternary quartic.
Over the years a significant interest in geometric properties of the cones $\Sigma_{n+1,2d}$ and $\mathcal{P}_{n+1,2d}$ has been developed. For instance, both $\Sigma_{n+1,2d}$ and $\mathcal{P}_{n+1,2d}$ are convex semialgebraic sets. Moreover, $\Sigma_{n+1,2d}$ is a spectrahedral shadow (i.e., the image of a spectrahedron under an affine linear map). In fact, it is known that any spectrahedral shadow is a convex semialgebraic set. In 2006, Nemirovski \cite{Nemirovski} asked if the converse is true as well. In 2009, Helton and Nie \cite{HeltonNie} conjectured that any convex semialgebraic set (over $\mathbb{R}$) is a spectrahedral shadow. During the next decade several results supporting the Helton-Nie conjecture in special cases were given, e.g., \cite{HN2,NetzerSanyal,Scheiderer2}. In 2018, Scheiderer \cite{Scheiderer} disproved the Helton-Nie conjecture by developing a method to produce convex semialgebraic sets that are not spectrahedral shadows. In particular, he gave the first counterexample by showing that $\mathcal{P}_{n+1,2d}$ is not a spectrahedral shadow in the non-Hilbert cases. It is thus interesting to investigate at what stage between the cones $\Sigma_{n+1,2d}$ and $\mathcal{P}_{n+1,2d}$ the property of being a spectrahedral shadow is lost. \\

\enlargethispage{\baselineskip}
\noindent In \cite{GHK}, we constructed and examined a specific cone filtration 
\begin{equation}\label{int:eq1} \Sigma_{n+1,2d}=C_0 \subseteq \ldots \subseteq C_n \subseteq C_{n+1} \subseteq \ldots\subseteq C_{k(n,d)-n}=\mathcal{P}_{n+1,2d}\end{equation} along $k(n,d)-n+1$ projective varieties containing the Veronese variety via the Gram matrix method introduced in \cite{CLR}. Here, $k(n,d)+1$ is the dimension of the real vector space of $(n+1)$-ary $d$-ics. For $n\geq 3$, we established that $C_0=\ldots=C_n$. For $n=2$, we even showed $C_0=C_1=C_2=C_3$. Moreover, if $d=2$ or $3$, we proved the remaining inclusions in (\ref{int:eq1}) being strict, i.e., 
$$\begin{cases} \Sigma_{n+1,2d}=C_{n}\subsetneq \ldots \subsetneq C_{k(n,d)-n}=\mathcal{P}_{n+1,2d},& \mathrm{\ for\ } (n+1,2d)_{n\geq 3,d=2,3} \\ \Sigma_{3,6}=C_{2+1}\subsetneq \ldots \subsetneq C_{9-2}=\mathcal{P}_{3,6}, & \mathrm{\ for\ } (n+1,2d)=(3,6). \end{cases}$$
More precisely, we showed that the number $\mu(n,d)$ of \textit{strictly separating intermediate cones} (i.e., $C_i$ such that $\Sigma_{n+1,2d}\subsetneq C_i \subsetneq \mathcal{P}_{n+1,2d}$) in (\ref{int:eq1}) in these particular cases is given by $$\mu(n,d)=\begin{cases}(k(n,d)-n)-(n+1), & \mbox{for } (n+1,2d)_{n\geq 3, d=2,3} \\3, & \mbox{for } (n+1,2d)=(3,6). \end{cases}$$ Here, note that for a given $f\in\mathcal{P}_{n+1,2d}\backslash\Sigma_{n+1,2d}$, there exists a unique $i(f)\in\{0,\ldots,k(n,d)-n-1\}$ such that $f\in C_{i(f)+1}\backslash C_{i(f)}$. Furthermore, $\vert \{i(f) \mid f\in F\}\vert \leq \mu(n,d)+1$ for any $F\subseteq\mathcal{P}_{n+1,2d}\backslash\Sigma_{n+1,2d}$. In fact, there exists some $F\subseteq\mathcal{P}_{n+1,2d}\backslash\Sigma_{n+1,2d}$ such that $\vert \{i(f) \mid f\in F\}\vert = \mu(n,d)+1$ which we call a \textit{complete set of separating forms}. We computed $\mu(n,d)$ for $d=2,3$ by producing complete sets of separating forms using the Motzkin ternary sextic, the Choi-Lam ternary sextic and the Choi-Lam quaternary quartic. \\

\noindent In this paper, we now firstly generalize our findings from \cite{GHK} to arbitrary non-Hilbert cases by identifying each strict inclusion in (\ref{int:eq1}). This gives us a refinement of Hilbert's 1888 Theorem by explicitly computing $\mu(n,d)$ and reduces (\ref{int:eq1}) to the cone subfiltration \begin{equation} \label{int:eq2} \Sigma_{n+1,2d}=C_0^\prime\subsetneq C_1^\prime \subsetneq \ldots \subsetneq C_{\mu(n,d)}^\prime \subsetneq C_{\mu(n,d)+1}^\prime=\mathcal{P}_{n+1,2d}
\end{equation} in which each inclusion is strict. Secondly, we apply the methods from \cite{Scheiderer} to show that, for $i=1,\ldots,\mu(n,d)$, each $C_i^\prime$ in (\ref{int:eq2}), and hence each strictly separating $C_i$ in (\ref{int:eq1}), fails to be a spectrahedral shadow. Consequently, we provide many explicit examples of separating cones between $\Sigma_{n+1,2d}$ and $\mathcal{P}_{n+1,2d}$ which are convex semialgebraic sets but not spectrahedral shadows. In particular, the property of being a spectrahedral shadow is lost as soon as we move from $\Sigma_{n+1,2d}$ to the first strictly separating $C_i$ in (\ref{int:eq1}).

\subsection{Preliminaries}
For $n,\, l\in\N$, we denote the vector space of real forms in $n+1$ variables of degree $l$ by $\mathcal{F}_{n+1,l}$ and call $f\in\mathcal{F}_{n+1,l}$ a \textit{$(n+1)$-ary $l$-ic}. For $K\subseteq\R^{n+1}$, we call $f$
\textit{locally positive semi-definite on $K$}, if $f(x)\geq 0$ for all $x\in K$. In the special case of 
$K=\R^{n+1}$, $f$ is said to be \textit{(globally) positive semi-definite}. For $d\in\N$, the set $\mathcal{P}_{n+1,2d}$ of all PSD forms in $\mathcal{F}_{n+1,2d}$ is a closed pointed full-dimensional convex cone (see \cite{Reznick}). We further call $f\in\mathcal{F}_{n+1,2d}$ a \textit{sum of squares}, if $f=\sum\limits_{i=1}^sg_i^2$ for some $g_1,\ldots,g_s\in\mathcal{F}_{n+1,d}$ and $s\in\N$.
The set $\Sigma_{n+1,2d}$ of all SOS forms in $\mathcal{F}_{n+1,2d}$ is also a closed pointed full-dimensional convex cone (see \cite{Reznick}). Clearly, $\Sigma_{n+1,2d}\subseteq\mathcal{P}_{n+1,2d}$ and we set $\Delta_{n+1,2d}:=\mathcal{P}_{n+1,2d}\backslash\Sigma_{n+1,2d}$. We say that $f\in\mathcal{P}_{n+1,2d}$ is \textit{PSD-extremal} if $f=f_1+f_2$ for $f_1,f_2\in\mathcal{P}_{n+1,2d}$ implies $f_i=\lambda_i f$ for some $\lambda_i\geq 0$ ($i\in\{1,2\}$). If $f$ is PSD-extremal, then, for $j=0,\ldots,n$, also the form $X_j^2f$ is PSD-extremal  (see \cite[Theorem 5.1 (iii)]{CKLR}).

\subsubsection*{\textbf{Circuit Forms}}
Set $\N_0:=\N\cup\{0\}$ and let $f\in\mathcal{F}_{n+1,2d}$ be supported on $A\subseteq\N_0^{n+1}$. Assume that all elements in the set $\mbox{vert}(A)$ of vertices of the convex hull of $A$ are even.  If $f(X)=\sum\limits_{j=0}^r f_{a(j)}X^{a(j)}+f_bX^{b}$ for $r\leq n+1$, pairwise distinct $a(0),\ldots,a(r),b\in A$, $f_{a(0)},\ldots,f_{a(r)}>0$, $f_b\in\R$ such that
\begin{enumerate}
    \item $vert(A)=\{a(0),\ldots,a(r)\}$,
    \item $a(0),\ldots,a(r)$ are affinely independent and
    \item there exist unique $\lambda_0,\ldots,\lambda_r>0$ with $\sum\limits_{j=0}^r \lambda_j =1$ and $b=\sum\limits_{j=0}^r\lambda_j a(j)$,
\end{enumerate} then $f$ is called a \textit{circuit form} with \textit{outer exponents} $a(0),\ldots,a(r)$. 
We denote the set of circuit forms in $\mathcal{F}_{n+1,2d}$ by $\mathfrak{C}_{n+1,2d}$. Furthermore, we also set $\mathcal{P}^\mathfrak{C}_{n+1,2d}:=\mathcal{P}_{n+1,2d}\cap\mathfrak{C}_{n+1,2d}$ and $\Delta^\mathfrak{C}_{n+1,2d}:=\Delta_{n+1,2d}\cap\mathfrak{C}_{n+1,2d}$. 

We order the set $\{\alpha\in\N_0^{n+1}\mid\vert\alpha\vert=d\}$ lexicographically which gives us the ordered set $\{\alpha_0,\ldots,\alpha_{k(n,d)}\}$ for $k(n,d):=\dim(\mathcal{F}_{n+1,d})-1=\binom{n+d}{n}-1$. This allows us to enumerate the monomial basis of $\mathcal{F}_{n+1,d}$ along the exponents $\alpha_j=(\alpha_{j,0},\ldots,\alpha_{j,n})$ by setting $m_j(X):=X^{\alpha_j}$ for $j=0,\ldots,k(n,d)$. For later purpose, for $i=0,\ldots,k(n,d)$, we set $\mathfrak{S}_i$ to be the $\R$-span of $m_sm_t$ for $0\leq s,t \leq i$. For $f\in\mathfrak{C}_{n+1,2d}$, the outer exponents are thus given by $2\alpha_{j_0},\ldots,2\alpha_{j_r}$ for some $j_0,\ldots,j_r\in\{0,\ldots,k(n,d)\}$. For future purpose, we set $j(f):=\max\{j_0,\ldots,j_r\}$.

\subsubsection*{\textbf{Construction of a Cone Filtration between $\Sigma_{n+1,2d}$ and $\mathcal{P}_{n+1,2d}$}} \label{Gram}
The following approach is based on the Gram matrix method \cite{CLR} and was introduced in detail in \cite{GHK}. We thus only briefly recall the relevant notations:

For $n,d\in\N$, let $k(n,d):=\dim(\mathcal{F}_{n+1,d})-1$ and write $k$ instead of $k(n,d)$ whenever $n,d$ are clear from the context. Set $\mathrm{Sym}_{k+1}(\R)$ to be the vector space of $(k+1)\times(k+1)$ real symmetric matrices, $Z:=(Z_0,\ldots,Z_k)$ and  \begin{eqnarray} \label{Q} Q\colon\mathrm{Sym}_{k+1}(\R) &\to&\mathcal{F}_{k+1,2} \\
	\nonumber			A 							&\mapsto&q_A(Z):=ZAZ^t.
\end{eqnarray} We consider the surjective \textit{Gram map}
\begin{eqnarray*} \mathcal{G} \colon \mathrm{Sym}_{k+1}(\R) & \to & \mathcal{F}_{n+1,2d} \\
									A				& \mapsto & f_A(X):=q_A(m_0(X)\ldots m_k(X))
\end{eqnarray*} and, for $f\in\mathcal{F}_{n+1,2d}$, call $A\in\mathcal{G}^{-1}(f)$ a \textit{Gram matrix associated to $f$} with corresponding \textit{quadratic form} $q_A$ \textit{(associated to $f$)}.
The map $\mathcal{G}$ is in general not injective. Hence, the kernel of $\mathcal{G}$ is non-trivial and can be characterized via the \textit{(projective) Veronese embedding}
\begin{eqnarray*} V\colon \PP^n & \to & \PP^k \\
							\, [x] & \mapsto & [m_0(x):\ldots:m_k(x)]
\end{eqnarray*} whose image $V(\PP^n)$ is called the \textit{Veronese variety}. For a generic Gram matrix $A_f$ associated to $f\in\mathcal{F}_{n+1,2d}$, we obtain
\begin{eqnarray}
\label{Kern} \mathcal{G}^{-1}(f)&=& \{A\in\mathrm{Sym}_{k+1}(\R)\mid q_{A-A_f} \mbox{\ vanishes\ on\ } V(\PP^n)\}
\end{eqnarray}
Moreover, the following characterization of SOS and PSD forms can be made:
\begin{eqnarray}
	\label{CharSOS2}
	\Sigma_{n+1,2d}
	&=&\{f\in\mathcal{F}_{n+1,2d}\mid \exists\ A\in \mathcal{G}^{-1}(f) \colon q_A\vert_{_{\PP^{k}(\R)}}\geq 0\} \\
    \label{CharPSD} 
	\mathcal{P}_{n+1,2d} &=&\{f\in\mathcal{F}_{n+1,2d}\mid \exists\ A\in \mathcal{G}^{-1}(f) \colon q_A\vert_{_{\mathcal{V}(\PP^n)(\R)}}\geq 0\}.
\end{eqnarray}
For a set $W\subseteq\PP^k$, this motivates the definition of the convex cone $$C_W:=\{f\in\mathcal{F}_{n+1,2d}\mid \exists\ A\in \mathcal{G}^{-1}(f) \colon q_A\vert_{_{W(\R)}}\geq 0\}.$$ Clearly, $\Sigma_{n+1,2d}=C_{\PP^k}$ and $\mathcal{P}_{n+1,2d}=C_{V(\PP^n)}$. Thus, if $V(\PP^n)\subseteq W$, then $C_W$ is an intermediate cone between $\Sigma_{n+1,2d}$ and $\mathcal{P}_{n+1,2d}$. Consequently, $C_W$ is pointed and full-dimensional. \\

We now propose a specific filtration of intermediate cones between $\Sigma_{n+1,2d}$ and $\mathcal{P}_{n+1,2d}$ that is obtained from assigning several varieties containing the Veronese variety to $W$: 
To this end, for $i=0,\ldots,k-n$, we set $$H_i:=\{[z]\in\PP^k\mid\exists\ x\in \C^{n+1}\colon(z_0,\ldots,z_{n+i})=(m_0(x),\ldots,m_{n+i}(x))\}$$ and let $V_i\subseteq \PP^k$ be the Zariski closure of $H_i$ in $\PP^k$ (that is, the smallest projective variety in $\PP^k$ containing $H_i$). This gives us a specific filtration of projective varieties $V(\PP^n)=V_{k-n} \subsetneq \ldots \subsetneq V_0=\PP^k$ with a corresponding specific filtration of sets of real points
\begin{eqnarray} \label{SetOfRealPointsFiltration} V(\PP^n)(\R)=V_{k-n}(\R) \subsetneq \ldots \subsetneq V_0(\R)=\PP^k(\R)
\end{eqnarray} in which all inclusions are strict. For later purpose, we recall from \cite{GHK} explicit descriptions of $V_0,\ldots,V_{k-n}$: For $i=1,\ldots,k-n$, we set $$s_i:=\min\{s\in\{1,\ldots,k\}\mid \exists\ t\in\{s,\ldots,k\}\colon \alpha_{s}+\alpha_{t}=\alpha_0+\alpha_{n+i}\}$$ and let $t_i\in\{s_i,\ldots,k\}$ be such that $\alpha_{s_i}+\alpha_{t_i}=\alpha_0+\alpha_{n+i}$. Next, we set $q_i(Z):=Z_0Z_{n+i}-Z_{s_i}Z_{t_i}$ and recall from \cite[Theorem 2.2]{GHK} that $V_i$ is the projective closure of the affine variety $$K_i:=\mathcal{V}(q_1(1,Z_1,\ldots,Z_k),\ldots,q_i(1,Z_1,\ldots,Z_k))\subseteq\C^k$$ under the embedding 
$$\begin{array}{cccc} \phi\colon&\C^k&\to&\PP^k \\ &(z_1,\ldots,z_k)&\mapsto& [1:z_1:\ldots:z_k].
\end{array}$$

Lastly, for $i=0,\ldots,k-n$, we set $C_i:=C_{V_i}$ and obtain the cone filtration
\begin{equation} \label{ConeFiltration}
	\Sigma_{n+1,2d}=C_0\subseteq \ldots \subseteq C_{k-n}=\mathcal{P}_{n+1,2d}.
\end{equation}
It is clear that at least one inclusion in (\ref{ConeFiltration}) is strict in any non-Hilbert case. However, it is not clear how many among these inclusions are strict and which ones. This brings us to the following question:
\begin{center} ($Q$) \textit{Which inclusions in (\ref{ConeFiltration}) are strict?} \end{center}
In \cite[Theorem A]{GHK}, we showed for $n\geq 3$ that $C_0=\ldots=C_n$. If $n=2$, we even established $C_0=C_1=C_2=C_3$. Question ($Q$) thus reduces to:
\begin{center} ($Q^\prime$) \textit{Which inclusions in $C_{n}\subseteq C_{n+1} \subseteq \ldots \subseteq C_{k-n}$ are strict?} \end{center}
In \cite[Theorem 4.1, 4.2, C and D]{GHK}, we demonstrated that, for the cases $(n+1,4)_{n\geq 3}$ and $(n+1,6)_{n\geq 2}$, $C_{n+1} \subsetneq \ldots \subsetneq C_{k-n}$. In the cases $(n+1,4)_{n\geq 3}$, we moreover established $C_n\subsetneq C_{n+1}$. It thus remains to answer question ($Q^\prime$) for the cases $(n+1,2d)_{n\geq 2,d\geq 4}$.

\subsubsection*{\textbf{Spectrahedral Shadows}} \label{Spec}
For $l\in\N$, let $\mathrm{Sym}_l^+(\R)$ denote the set of all positive semi-definite matrices in $\mathrm{Sym}_{l}(\R)$. Note that $\mathrm{Sym}_l^+(\R)$ is a closed convex cone in $\mathrm{Sym}_l(\R)$. For $m\in\N$, let $K\subseteq\R^m$. If there exist an $l\in\N$ and an affine linear map $T:\R^m\to\mathrm{Sym}_l(\R)$ such that $K$ is the preimage of $\mathrm{Sym}_l^+(\R)$ under $T$, then $K$ is called a \textit{spectrahedron}. However, if there exists an $l\in\N$, a spectrahedron $S\in\R^l$ and a linear map $T\colon \R^l\to\R^m$ such that $K=T(S)$, then $K$ is called a \textit{spectrahedral shadow}. We refer an interested reader to \cite{NetzerPlaumann} for an introduction to spectrahedral shadows and their essential properties.

\subsection{Structure of the Paper and Main Results} \label{Structure}
In Section \ref{Cone:Description}, we prove for any $q\in\mathcal{F}_{k+1,2}$ that if $q\vert_{_{\phi(K_i)(\R)}}\geq 0$, then $q\vert_{_{\overline{\phi(K_i)}(\R)}}\geq 0$ (where the closure is taken w.r.t.\ the Zariski topology). This gives us the first main result:
\begin{customthm}{A} \label{cor:extendingCi} Let $n,d\geq 1$ and $i=0,\ldots,k-n$, then $C_i=C_{\phi(K_i)}$.
\end{customthm}

\noindent Recall that for $f\in\Delta_{n+1,2d}$, there exists a unique $i(f)\in\{1,\ldots,k-n-1\}$ such that $f\in C_{i(f)+1}\backslash C_{i(f)}$. In Section \ref{if}, we firstly use Theorem \ref{cor:extendingCi} to compute, for given $f\in\Delta_{n+1,2d}^{\mathfrak{C}}$, a non-trivial upper bound for $i(f)$. Secondly, we show that this upper bound is sharp if $f$ is moreover PSD-extremal. As a consequence, the second main result, i.e., Theorem \ref{cor:greatestcone} below, follows. Recall that for a circuit form $f$ with outer exponents $2\alpha_{j_0},\ldots,2\alpha_{j_r}$, $j(f):=\max\{j_0,\ldots,j_r\}$.

\begin{customthm}{B} \label{cor:greatestcone} For $n,d\geq 2$, $(n+1,2d)\neq (3,4)$, let $f\in\Delta^\mathfrak{C}_{n+1,2d}$ be PSD-extremal, then
$i(f)=j(f)-n-1$.
\end{customthm}

\noindent In \cite{GHK}, for $n\geq 2$ and $d=3$, we constructed explicit complete sets of separating forms $I_n\subseteq\Delta_{n+1,6}$. In Section \ref{Degree-Jumping Principle}, we firstly re-examine these sets and verify that each $f_i\in I_n$ is a PSD-extremal circuit form. Secondly, we develop a Degree-Jumping Principle as the tool to construct from $I_n$, for any $d\geq 4$, complete sets of separating PSD-extremal circuit forms in $\Delta_{n+1,2d}$. This allows us to answer ($Q^\prime$) by proving the third main result of this paper:

\begin{customthm}{C} \label{thm:allstrict} For $(n+1,2d)_{n\geq 2,d\geq 3}$ and $i=n+1,\ldots,k-n-1$, the strict inclusion $C_i\subsetneq C_{i+1}$ holds. Moreover, if $n\geq 3$, then also the strict inclusion $C_{n}\subsetneq C_{n+1}$ holds.
\end{customthm}

\noindent In Section \ref{properties}, we reconsider the complete sets of separating forms constructed in the proof of Theorem \ref{thm:allstrict} to show that if $n\geq 2$, then $C_{\phi(K_i)}$ is not a spectrahedral shadow for $i=n+2,\ldots,k-n$. Moreover, if $n\geq 3$, then also $C_{\phi(K_{n+1})}$ is not a spectrahedral shadow. This together with Theorem \ref{cor:extendingCi} gives the following fourth main result:

\begin{customthm}{D} \label{thm:spectrahedralshadows} For $(n+1,2d)_{n,d\geq 2}\neq(3,4)$ and $i=n+2,\ldots,k-n$, $C_i$ is not a spectrahedral shadow. Moreover, if $n\geq 3$, then also $C_{n+1}$ is not a spectrahedral shadow.
\end{customthm}

\section{Non-Negativity on Projective Closures of Affine Varieties} \label{Cone:Description}

The aim of this section is to establish $C_i=C_{\phi(K_i)}$ for $i=0,\ldots,k-n$ by proving Theorem \ref{cor:extendingCi}. To this end, it is crucial to understand when a quadratic form that is locally PSD on $\phi(K_i)(\R)$ can be extended (over the Veronese variety) to a quadratic form that is locally PSD on $V_i(\R)$ (where $V_i$ is known to be the projective closure of the embedded affine variety $\phi(K_i)$). We argue in three steps as follows: \\

\noindent \underline{Step 1}: Recall that any quadratic form $q\in\mathcal{F}_{k+1,2}$ that is locally PSD on an a priori fixed affine set $K\subseteq \R^{k+1}$ is also locally PSD on the Euclidean closure $\overline{K}$ of $K$ in $\R^{k+1}$. We generalize this observation to an embedded affine set $\phi(K)\subseteq\PP^k$ for $K\subseteq\C^k$ and a quadratic form $q\in\mathcal{F}_{k+1,2}$ which is locally PSD on $\phi(K)(\R)$ by giving a sufficient condition for $\phi(K)$, in Lemma \ref{prop:extension}, that allows us to conclude that $q$ is also locally PSD on $\overline{\phi(K)}(\R)$ (w.r.t.\ the Euclidean topology on $\PP^k$). 

\noindent \underline{Step 2}: In the special case when $\phi(K)$ is an embedded affine variety, we explain in the proof of Theorem \ref{cor:conescoincidevariety} that the local PSD property on $\phi(K)(\R)$ even extends to the set of real points of the projective closure of $\phi(K)$.

\noindent \underline{Step 3}: In Proposition \ref{thm:VeroneseCase} and Proposition \ref{thm:KiCase}, for $i=0,\ldots,k-n$, we show that $\phi(K_i)$ satisfies the sufficient condition found in the first step.

\begin{lem} \label{prop:extension} Let $n,d\geq 1$, $K\subseteq \C^k$ and $q\in\mathcal{F}_{k+1,2}$, $q\vert_{_{\phi(K)(\R)}}\geq 0$. If $\overline{\phi(K)(\R)}=\overline{\phi(K)}(\R)$ w.r.t.\ the Euclidean topology on $\PP^k$, then $q\vert_{_{\overline{\phi(K)}(\R)}}\geq 0$.
\end{lem}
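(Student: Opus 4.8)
The plan is to reduce the statement to the elementary fact that the non-negativity locus of a continuous function is closed, carried out on the real projective space $\PP^k(\R)$ equipped with its Euclidean topology. Since $q$ is homogeneous of the even degree $2$, the sign of $q(v)$ depends only on the projective point $[v]$, so the condition ``$q\vert_{_{W(\R)}}\geq 0$'' for a subset $W\subseteq\PP^k$ is genuinely a condition on the real points of $W$ viewed as a subset of $\PP^k(\R)$.

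First I would introduce the set $N:=\{[v]\in\PP^k(\R)\mid q(v)\geq 0\}$ and observe that $N$ is closed in $\PP^k(\R)$ w.r.t.\ the Euclidean topology. For this, let $\pi\colon\R^{k+1}\setminus\{0\}\to\PP^k(\R)$ denote the canonical projection, which is continuous, open and surjective. The set $\{v\in\R^{k+1}\setminus\{0\}\mid q(v)<0\}$ is open because $q$ is continuous, and since $q(\lambda v)=\lambda^2 q(v)$ its image under $\pi$ is exactly $\PP^k(\R)\setminus N$; as $\pi$ is an open map, $\PP^k(\R)\setminus N$ is open, hence $N$ is closed. (Alternatively one may restrict $q$ to the unit sphere $S^k$, on which it is a continuous real-valued function with $\{q\geq 0\}$ compact, and pass to the quotient $S^k\to\PP^k(\R)$.)

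Next, the assumption $q\vert_{_{\phi(K)(\R)}}\geq 0$ says precisely that $\phi(K)(\R)\subseteq N$. Since $N$ is Euclidean-closed, this gives $\overline{\phi(K)(\R)}\subseteq N$, the closure being taken in $\PP^k(\R)$ w.r.t.\ the Euclidean topology. Invoking the hypothesis $\overline{\phi(K)(\R)}=\overline{\phi(K)}(\R)$ then yields $\overline{\phi(K)}(\R)\subseteq N$, which is exactly the claim $q\vert_{_{\overline{\phi(K)}(\R)}}\geq 0$.

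I do not expect a serious obstacle in the proof of this lemma itself; the only point requiring a little care is the passage from the affine cone $\R^{k+1}\setminus\{0\}$ (or the sphere $S^k$) down to $\PP^k(\R)$, i.e.\ checking that the non-negativity locus of $q$ descends to a Euclidean-closed subset of projective space. The genuine difficulty has been absorbed into the hypothesis $\overline{\phi(K)(\R)}=\overline{\phi(K)}(\R)$, whose verification for the varieties $\phi(K_i)$ is the content of the subsequent Steps~2 and~3, where one must compare the Euclidean closure of the set of real points with the set of real points of the Zariski closure.
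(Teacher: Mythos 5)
Your proof is correct, and it rests on the same underlying mechanism as the paper's: continuity of $q$ plus the fact that non-negativity survives passage to the closure. The difference is one of packaging. The paper runs the argument by hand: it picks $[z]\in\overline{\phi(K)(\R)}$, a sequence $[z^{(m)}]\in\phi(K)(\R)$ with real representatives $z^{(m)}$, and complex scalars $\lambda_m$ with $\lambda_m z^{(m)}\to z$, then observes that $\mathrm{Re}(\lambda_m)z^{(m)}\to z$ and concludes $q(z)=\lim_m(\mathrm{Re}(\lambda_m))^2 q(z^{(m)})\geq 0$. The explicit splitting into real and imaginary parts is there precisely because the closure in the lemma is taken in the complex space $\PP^k$, so the rescaling factors witnessing convergence are a priori complex. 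Your route absorbs all of this into the single observation that $N=\{[v]\in\PP^k(\R)\mid q(v)\geq 0\}$ is Euclidean-closed, which is arguably cleaner and reusable. The one point you should make explicit is that the closures in the lemma are taken in $\PP^k$, not in $\PP^k(\R)$: your argument still closes because $\PP^k(\R)$ is compact, hence closed in the Hausdorff space $\PP^k$ with the expected subspace topology, so $N$ is closed in $\PP^k$ as well and $\overline{\phi(K)(\R)}\subseteq N$ follows for the closure in $\PP^k$. That is a one-line remark, not a gap, and your reading of where the real difficulty lives (in verifying $\overline{\phi(K_i)(\R)}=\overline{\phi(K_i)}(\R)$, i.e.\ Propositions 2.4 and 2.5) matches the paper's structure.
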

    \begin{proof} For $[z]\in\overline{\phi(K)(\R)}\subseteq\PP^k(\R)$, we fix $([z^{(m)}])_{m\in\N}\subseteq\phi(K)(\R)$ and $(\lambda_m)_{m\in\N}\in\C^\times$ such that $\lambda_mz^{(m)}\to z$ as $m\to \infty$. It follows $\mathrm{Re}(\lambda_m)z^{(m)}\to z$ and $\mathrm{Im}(\lambda_m)z^{(m)}\to 0$ as $m\to\infty$, since $(z^{(m)})_{m\in\N}\subseteq\R^{k+1}$ and $z\in\R^{k+1}$. From $q\vert_{_{\phi(K)(\R)}}\geq 0$ and the fact that the quadratic form $q$ is continuous w.r.t.\ the Euclidean topology on $\R^{k+1}$, we conclude $$q(z)=\lim\limits_{m\to\infty}(\mathrm{Re}(\lambda_m))^2q(z^{(m)})\geq 0.$$
    \end{proof}

\begin{cor} \label{cor:conescoincide} Let $n,d\geq 1$ and $K\subseteq \C^k$. If $\overline{\phi(K)(\R)}=\overline{\phi(K)}(\R)$ w.r.t.\ the Euclidean topology on $\PP^k$, then $C_{\phi(K)}=C_{\overline{\phi(K)}}$.
\end{cor}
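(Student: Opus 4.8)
The plan is to obtain the equality of cones as a one-step reduction to Lemma \ref{prop:extension}, treating the two inclusions separately.

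First I would dispose of the inclusion $C_{\overline{\phi(K)}}\subseteq C_{\phi(K)}$, which uses no hypothesis: since $\phi(K)\subseteq\overline{\phi(K)}$ we get $\phi(K)(\R)\subseteq\overline{\phi(K)}(\R)$, so any Gram matrix $A\in\mathcal{G}^{-1}(f)$ with $q_A\vert_{_{\overline{\phi(K)}(\R)}}\geq 0$ also satisfies $q_A\vert_{_{\phi(K)(\R)}}\geq 0$; unwinding the definition of $C_W$, every $f\in C_{\overline{\phi(K)}}$ lies in $C_{\phi(K)}$.

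For the reverse inclusion I would fix $f\in C_{\phi(K)}$ together with a witnessing Gram matrix $A\in\mathcal{G}^{-1}(f)$ for which $q_A\vert_{_{\phi(K)(\R)}}\geq 0$. Since $q_A=ZAZ^t\in\mathcal{F}_{k+1,2}$ and the hypothesis $\overline{\phi(K)(\R)}=\overline{\phi(K)}(\R)$ holds, Lemma \ref{prop:extension} applied with $q=q_A$ gives $q_A\vert_{_{\overline{\phi(K)}(\R)}}\geq 0$. The same matrix $A$ then certifies $f\in C_{\overline{\phi(K)}}$, and combining the two inclusions yields $C_{\phi(K)}=C_{\overline{\phi(K)}}$.

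I do not expect a genuine obstacle here, as the content is entirely carried by Lemma \ref{prop:extension}. The only point worth a remark is that the sign of $q_A$ is well defined on points of $\PP^k$, since $q_A$ is homogeneous of even degree and hence $q_A(\lambda z)=\lambda^2 q_A(z)$; this is what makes the notation $q_A\vert_{_{W(\R)}}\geq 0$ meaningful for projective $W$ and legitimizes passing between the affine formulation of Lemma \ref{prop:extension} and the projective cones $C_{\phi(K)}$ and $C_{\overline{\phi(K)}}$.
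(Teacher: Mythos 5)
Your argument is correct and is exactly what the paper's one-line proof ("clear from the definition of $C_{\phi(K)}$, $C_{\overline{\phi(K)}}$ and Lemma \ref{prop:extension}") intends: the easy inclusion from $\phi(K)(\R)\subseteq\overline{\phi(K)}(\R)$, and the reverse inclusion by applying the lemma to a witnessing Gram matrix. Your closing remark on the well-definedness of the sign of $q_A$ on projective points is a reasonable sanity check but adds nothing beyond the paper's route.
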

    \begin{proof} Clear from the definition of $C_{\phi(K)}$, $C_{\overline{\phi(K)}}$ and Lemma \ref{prop:extension}.
    \end{proof}
 
\begin{thm} \label{cor:conescoincidevariety} Let $n,d\geq 1$ and $K\subseteq\C^k$ be an affine variety. If $\overline{\phi(K)(\R)}=\overline{\phi(K)}(\R)$ w.r.t.\ the Euclidean topology on $\PP^k$, then $C_{\phi(K)}=C_{\overline{\phi(K)}}$ w.r.t.\ the Zariski topology on $\PP^k$.
\end{thm}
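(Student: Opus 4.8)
The plan is to reduce the claim to Corollary~\ref{cor:conescoincide} by way of the classical comparison between the Zariski and Euclidean closures of a complex affine variety. Write $X^{\mathrm e}$ and $X^{\mathrm z}$ for the closure of $\phi(K)$ in $\PP^k$ with respect to the Euclidean, resp.\ the Zariski, topology. The inclusion $X^{\mathrm e}\subseteq X^{\mathrm z}$ is automatic, since Zariski-closed sets are Euclidean-closed. The first thing I would prove is the reverse inclusion $X^{\mathrm z}\subseteq X^{\mathrm e}$, for which the hypothesis that $K$ (hence $\phi(K)$) is a variety is exactly what is needed. Once $X^{\mathrm e}=X^{\mathrm z}=:X$ is established, the hypothesis of the theorem is precisely the hypothesis of Corollary~\ref{cor:conescoincide}, so $C_{\phi(K)}=C_{X^{\mathrm e}}=C_X=C_{X^{\mathrm z}}$, the last equality holding because the cone $C_W$ depends only on the set $W(\R)$ while $X^{\mathrm e}=X^{\mathrm z}$ as subsets of $\PP^k$; this is exactly the asserted equality $C_{\phi(K)}=C_{\overline{\phi(K)}}$ in the Zariski sense.

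For the inclusion $X^{\mathrm z}\subseteq X^{\mathrm e}$, I would first note that $\phi$ identifies $\C^k$ isomorphically (as varieties) with the Zariski-open set $\PP^k\setminus\mathcal{V}(Z_0)$, so $\phi(K)$ is Zariski-closed in $\PP^k\setminus\mathcal{V}(Z_0)$, and therefore $\phi(K)=X^{\mathrm z}\cap(\PP^k\setminus\mathcal{V}(Z_0))$ is Zariski-open and dense in $X^{\mathrm z}$. Next I would decompose $X^{\mathrm z}=X_1\cup\ldots\cup X_r$ into irreducible components; since $\phi(K)$ is dense in $X^{\mathrm z}$ and no component is contained in the union of the others, each $\phi(K)\cap X_j$ is a nonempty Zariski-open, hence Zariski-dense, subset of the irreducible complex variety $X_j$. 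Invoking the standard fact that a nonempty Zariski-open subset of an irreducible complex variety is dense for the Euclidean topology (its complement is a proper subvariety, hence nowhere dense in the Euclidean topology, and the smooth locus of $X_j$ is a connected complex manifold), I obtain $X_j\subseteq X^{\mathrm e}$ for every $j$, whence $X^{\mathrm z}=\bigcup_j X_j\subseteq X^{\mathrm e}$.

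The single non-formal ingredient, and the step I expect to require the most care in a complete write-up, is this comparison of closures. One can either shortcut it by citing the well-known theorem that a constructible subset of a complex algebraic variety has the same closure in the Zariski and in the Euclidean topology — which applies to the locally closed set $\phi(K)$ — or present the irreducible-components argument above, whose only substantive input is that a proper subvariety of an irreducible complex variety is nowhere dense for the Euclidean topology, a fact ultimately resting on the connectedness of the smooth locus. All the remaining steps are routine: the reduction described in the first paragraph together with a direct appeal to Corollary~\ref{cor:conescoincide}.
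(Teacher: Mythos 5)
Your proposal is correct and follows essentially the same route as the paper: establish that the Euclidean and Zariski closures of $\phi(K)$ in $\PP^k$ coincide (the paper simply cites \cite[(2.33) Theorem]{Mumford}, which is the constructible-sets shortcut you mention, while you additionally sketch the irreducible-components argument), and then conclude via Corollary~\ref{cor:conescoincide}.
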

    \begin{proof} By \cite[(2.33) Theorem]{Mumford}, the Euclidean closure and the Zariski closure of $\phi(K)$ in $\PP^k$ coincide. Hence, by Lemma \ref{prop:extension}, $q\in\mathcal{F}_{k+1,2}$ is locally PSD on the set of real points of the embedded affine variety $\phi(K)\subseteq\PP^k$ if and only if $q$ is locally PSD on the set of real points of the smallest projective variety in $\PP^k$ containing $\phi(K)$. 
    \end{proof}

\begin{prop} \label{thm:VeroneseCase} Let $n,d\geq 1$, then $\overline{\phi(K_{k-n})(\R)}=\overline{\phi(K_{k-n})}(\R)$ w.r.t.\ the Euclidean topology on $\PP^k$.
\end{prop}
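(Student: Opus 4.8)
The plan is to identify both sides of the asserted equality with the set $V(\PP^n)(\R)$ of real points of the Veronese variety. On the right, by \cite[Theorem 2.2]{GHK} the projective (Zariski) closure of $\phi(K_{k-n})$ in $\PP^k$ is $V_{k-n}=V(\PP^n)$, and by \cite[(2.33) Theorem]{Mumford} this coincides with the Euclidean closure $\overline{\phi(K_{k-n})}$; hence $\overline{\phi(K_{k-n})}(\R)=V(\PP^n)(\R)$, and it remains to prove $\overline{\phi(K_{k-n})(\R)}=V(\PP^n)(\R)$, the closure on the left taken w.r.t.\ the Euclidean topology on $\PP^k(\R)$.

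First I would record the elementary fact that $V(\PP^n)(\R)=V(\PP^n(\R))$. The inclusion ``$\supseteq$'' is clear. For ``$\subseteq$'', if $[z]\in\PP^k(\R)$ and $[z]=V([w])$ with $[w]\in\PP^n(\C)$, then applying complex conjugation and using that the Veronese embedding is defined by monomials with integer coefficients gives $V([w])=\overline{V([w])}=V(\overline{[w]})$, so $[w]=\overline{[w]}\in\PP^n(\R)$ by injectivity of $V$. In particular $V(\PP^n)(\R)=V(\PP^n(\R))$ is the image of the compact space $\PP^n(\R)$ under the Euclidean-continuous map $V$, hence compact and therefore Euclidean-closed in $\PP^k(\R)$.

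The inclusion ``$\subseteq$'' of the Proposition is then immediate: $\phi(K_{k-n})\subseteq V_{k-n}$ gives $\phi(K_{k-n})(\R)\subseteq V(\PP^n)(\R)$, and the latter is closed, so $\overline{\phi(K_{k-n})(\R)}\subseteq V(\PP^n)(\R)$. For the reverse inclusion I would exhibit a Euclidean-dense subset of $V(\PP^n)(\R)$ lying inside $\phi(K_{k-n})(\R)$. Put $S:=\{[x]\in\PP^n(\R)\mid m_0(x)\neq 0\}$; since $\alpha_0$ is a pure power (being lex-extremal among the exponents of degree $d$), $S$ is a standard affine chart of $\PP^n(\R)$ and hence Euclidean-dense. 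For $[x]\in S$ pick the representative with $m_0(x)=1$; then, for $i=1,\ldots,k-n$, the defining relation $\alpha_{s_i}+\alpha_{t_i}=\alpha_0+\alpha_{n+i}$ of $K_{k-n}$ yields $m_{s_i}(x)\,m_{t_i}(x)=x^{\alpha_0}x^{\alpha_{n+i}}=m_{n+i}(x)$ (here $s_i,t_i\geq 1$, so these are genuine affine coordinates), whence $(m_1(x),\ldots,m_k(x))\in K_{k-n}$ and $V([x])=\phi\big((m_1(x),\ldots,m_k(x))\big)\in\phi(K_{k-n})(\R)$. Thus $V(S)\subseteq\phi(K_{k-n})(\R)$, and since $V$ is a closed map on the compact space $\PP^n(\R)$ we obtain $V(\PP^n)(\R)=V(\PP^n(\R))=V(\overline{S})=\overline{V(S)}\subseteq\overline{\phi(K_{k-n})(\R)}$, which together with ``$\subseteq$'' finishes the proof.

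I expect the only delicate point to be the bookkeeping in the verification that $(m_1(x),\ldots,m_k(x))\in K_{k-n}$ — keeping track of the monomial $m_0$, of the normalisation $m_0(x)=1$, and of the fact that $s_i,t_i\geq 1$ so that the relevant coordinates survive the affine embedding $\phi$. Everything else is soft point-set topology (continuity, compactness, the fact that a continuous map from a compact space to a Hausdorff space is closed) combined with the conjugation argument of the second paragraph.
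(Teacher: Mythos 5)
Your proof is correct and follows essentially the same route as the paper's: both reduce to showing that the real Veronese points $V([x])$ with $x_0\neq 0$ lie in $\phi(K_{k-n})(\R)$ and that the remaining points (with $x_0=0$) are Euclidean limits of these — the paper via the explicit sequences $(\tfrac1m,x_1,\ldots,x_n)$, you via density of the affine chart together with compactness of $\PP^n(\R)$. Your additional verifications (the conjugation argument for $V(\PP^n)(\R)=V(\PP^n(\R))$ and the explicit check of the quadrics $q_i$ defining $K_{k-n}$) are details the paper leaves implicit, and they check out.
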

    \begin{proof} $(\subseteq)$ Clear. \\
    $(\supseteq)$ Observe that $\overline{\phi(K_{k-n})}(\R)=V(\PP^n)(\R)=V(\PP^n(\R))$ and distinguish two cases for $[x]\in\PP^n(\R)$: \\
    \underline{Case 1}: If $x_0\neq 0$, then $V([x])\in\phi(K_{k-n})(\R)\subseteq\overline{\phi(K_{k-n})(\R)}$. \\
    \underline{Case 2}: If $x_0=0$, then we set $\tilde{x}^{(m)}:=(\frac{1}{m},x_1,\ldots,x_n)$ for $m\in\N$ and observe that $V([\tilde{x}^{(m)}])\in\phi(K_{k-n})(\R)$. Furthermore, $\tilde{x}^{(m)}\to x$ as $m\to\infty$. This implies $(m_0(\tilde{x}^{(m)}),\ldots,m_k(\tilde{x}^{(m)}))\to (m_0(x),\ldots,m_k(x))$ as $m\to\infty$. Hence, $V([x])\in\overline{\phi(K_{k-n})(\R)}$.
    \end{proof}

\begin{prop} \label{thm:KiCase} Let $n,d\geq 1$ and $i=0,\ldots,k-n-1$, then $\overline{\phi(K_i)(\R)}=\overline{\phi(K_i)}(\R)$ w.r.t.\ the Euclidean topology on $\PP^k$.
\end{prop}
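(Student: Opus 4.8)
The plan is to prove the nontrivial inclusion $\overline{\phi(K_i)}(\R)\subseteq\overline{\phi(K_i)(\R)}$ (the reverse being clear since $\overline{\phi(K_i)(\R)}\subseteq\overline{\phi(K_i)}(\R)$ always holds, as $\phi(K_i)(\R)\subseteq\phi(K_i)\subseteq\overline{\phi(K_i)}$ and $\overline{\phi(K_i)}(\R)$ is Euclidean-closed). So fix a real point $[z]\in\overline{\phi(K_i)}(\R)=V_i(\R)$; I must realize it as a Euclidean limit of real points coming from $\phi(K_i)$. The first step is to use the explicit description recalled in the preliminaries: $V_i$ is the projective closure under $\phi$ of the affine variety $K_i=\mathcal{V}(q_1(1,Z_1,\ldots,Z_k),\ldots,q_i(1,Z_1,\ldots,Z_k))\subseteq\C^k$, where $q_\ell(Z)=Z_0Z_{n+\ell}-Z_{s_\ell}Z_{t_\ell}$. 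A point $[z]\in V_i(\R)$ either has $z_0\neq 0$, in which case after rescaling $z=(1,z_1,\ldots,z_k)$ with $(z_1,\ldots,z_k)$ satisfying the (real) binomial equations $q_\ell(1,z)=0$, so $[z]\in\phi(K_i)(\R)$ directly; or it lies on the hyperplane at infinity $\{Z_0=0\}$, which is the only place where the projective closure can add points, and this is the case one must handle by a limiting argument.

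For the case $z_0=0$, the second step is to perturb in the first Veronese coordinate. Concretely, I would like to mimic the trick in Proposition \ref{thm:VeroneseCase}: take a real point on the variety at infinity, move it slightly into the affine chart $\{Z_0\neq 0\}$ so that the perturbed point still satisfies (or approximately satisfies, then is corrected to satisfy) the equations $q_1=\cdots=q_i=0$, and let the perturbation parameter go to $0$. The key structural fact that makes this work is that each $q_\ell$ is a \emph{binomial} $Z_0Z_{n+\ell}-Z_{s_\ell}Z_{t_\ell}$ with $s_\ell,t_\ell\geq 1$; on $\{Z_0=0\}$ these equations degenerate to $Z_{s_\ell}Z_{t_\ell}=0$, and one needs to check that any real solution of the limiting system at infinity that lies in $V_i$ is genuinely approached by real affine solutions. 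I expect to argue this by noting that $V_i$ is (as recalled) an irreducible variety — indeed it is a coordinate projection/closure of the Veronese — and more usefully by exhibiting an explicit rational parametrization: writing $z$ on a suitable open subset as $V_j$-style monomials in fewer variables, one can replace a vanishing leading variable $x_0$ by $1/m$ exactly as in Proposition \ref{thm:VeroneseCase}, reducing the $K_i$ case to the Veronese case that is already handled. The cleanest route is: reduce to showing that the point at infinity of $V_i$ in question lies in the Euclidean closure of the real points of an affine chart, and then invoke the same $(\tfrac1m,x_1,\ldots,x_n)\to(0,x_1,\ldots,x_n)$ continuity-of-monomials computation.

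The third step is the bookkeeping: one must verify that the perturbed points $z^{(m)}$ obtained this way are real, lie in $\phi(K_i)(\R)$ (i.e.\ satisfy all the binomial relations $q_1,\ldots,q_i$ — not just some of them), and converge projectively to $[z]$. Convergence follows because each coordinate is a polynomial (monomial) in the perturbation data, hence continuous, exactly as in the proof of Proposition \ref{thm:VeroneseCase}; membership in $K_i$ follows because the $q_\ell$ are polynomial identities that hold for the parametrized family for all $m$; reality is immediate since all constructions are over $\R$. Combining the two cases ($z_0\neq 0$ and $z_0=0$) gives $V_i(\R)=\overline{\phi(K_i)}(\R)\subseteq\overline{\phi(K_i)(\R)}$, which is the claim.

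The main obstacle I anticipate is Step 2 in the case $z_0=0$: unlike the pure Veronese situation, where a point at infinity of $V(\PP^n)$ is automatically $V([x])$ for some $[x]\in\PP^n(\R)$ with $x_0=0$ and the perturbation $x_0\mapsto \tfrac1m$ is transparent, for a general $V_i$ one first has to identify which points of $\overline{\phi(K_i)}$ actually lie at infinity and produce a workable parametrization of a Zariski-dense subset of $V_i$ through which the perturbation can be pushed; getting an \emph{explicit} real family $z^{(m)}\in\phi(K_i)(\R)$ with $[z^{(m)}]\to[z]$, rather than merely an abstract density statement, is where the real work lies. I would resolve this by leaning on the description of $V_i$ from \cite[Theorem 2.2]{GHK} together with the interpolating structure $H_i$ (points whose first $n+i+1$ coordinates are genuine Veronese values $m_0(x),\ldots,m_{n+i}(x)$), which already comes with the needed parametrization by $x\in\C^{n+1}$; restricting to real $x$ and perturbing $x_0$ as above then furnishes the required approximating sequence.
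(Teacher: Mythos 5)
Your outline has the right overall shape (reduce to the truncated Veronese parametrization and perturb $x_0\mapsto\frac1m$ as in Proposition \ref{thm:VeroneseCase}), and you correctly locate the difficulty in Step 2 — but you do not resolve it, and the step you dismiss as ``reality is immediate since all constructions are over $\R$'' is precisely the non-trivial core of the argument. A real point $[z]\in\overline{\phi(K_i)}(\R)$ is, a priori, parametrized (on its first $n+i+1$ coordinates, and only up to a complex projective scalar and up to Zariski closure of $H_i$) by a \emph{complex} $x\in\C^{n+1}$; nothing forces $x$ to be real, and ``restricting to real $x$'' begs the question. The paper's proof spends its final third exactly here: it shows $\pi(V(\PP^n)\cap H)(\R)\subseteq\pi(V(\PP^n)(\R)\cap H)$ by locating the first index $j$ with $x_j\neq 0$ and $X_j^d\geq_{lex}m_{n+i}(X)$, observing that then $x_j^d=z_s\in\R$, extracting a real $d$-th root $y$ (after a sign normalization of the representative), and checking that the ratios $\lambda_l=x_{j+l}/x_j$ are real, so that an explicit real $\tilde x$ reproduces the first $n+i+1$ coordinates. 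Without this (or an equivalent real-points argument), your approximating sequence need not consist of real points of $\phi(K_i)$.

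Two further points are glossed over. First, the coordinates $z_{n+i+1},\ldots,z_k$ are not monomials in the parameter $x$ — they are essentially free on $K_i$ — so ``convergence follows because each coordinate is a monomial, hence continuous'' does not apply to them; one must patch them in by hand with the correct real scaling factors (the paper uses $\frac{1}{\mathrm{Re}(\gamma_m)}z_j$ after passing from complex scalars $\gamma_m$ to their real parts), and this is also why the paper routes the argument through the projection $\pi$ onto the first $n+i+1$ coordinates rather than working with full points of $H_i$. Second, your dichotomy $z_0\neq 0$ versus $z_0=0$ is coarser than what the argument needs: the relevant split is whether \emph{all} of $z_0,\ldots,z_{n+i}$ vanish (in which case the first block can be filled with Veronese values of $(\frac1m,\ldots,\frac1m)$ and the tail kept fixed) or not (in which case the projection/real-root argument is required). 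As written, the proposal is a plausible plan with the main lemma still missing, not a proof.
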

    \begin{proof} $(\subseteq)$ Clear. \\
    $(\supseteq)$ Set $H:=\{[z]\in\PP^k \mid \exists\ j\in\{0,\ldots,n+i\} \colon z_j\neq 0\}$ and consider the continuous (w.r.t.\ to the Euclidean topology) map $$\begin{array}{rccc} \pi\colon&\PP^k\cap H &\to& \PP^{n+i} \\ &[z]&\mapsto& [z_0:\ldots:z_{n+i}].
    \end{array}$$
    We assume $\overline{\pi(\phi(K_{k-n}))}(\R)\subseteq\overline{\pi(\phi(K_{k-n})(\R))}$ for now and conclude
    \begin{equation} \label{closureinclusion} \overline{\pi(\phi(K_i))}(\R)\subseteq\overline{\pi(\phi(K_{k-n}))}(\R)\subseteq\overline{\pi(\phi(K_{k-n})(\R))}\subseteq \overline{\pi(\phi(K_i))(\R)}.
    \end{equation} 

    \vspace*{1mm}
    \noindent We now distinguish two cases for $[z]\in \overline{\phi(K_i)}(\R)$ and show $[z]\in\overline{\phi(K_i)(\R)}$: \\
    \underline{Case 1}: If $[z]\not\in H$, then we consider $x^{(m)}:=\left(\frac{1}{m},\ldots,\frac{1}{m}\right)\in\R^{n+1}$ and $y^{(m)}:=(m_0(x^{(m)}),\ldots,m_{n+i}(x^{(m)}),z_{n+i+1},\ldots,z_k)$ for $m\in\N$. Clearly, $[y^{(m)}]\in\phi(K_i)(\R)$ and $y^{(m)}\to z$ as $m\to\infty$. So, $[z]\in\overline{\phi(K_i)(\R)}$. \\
    \underline{Case 2}: If $[z]\in H$, then we fix $([y^{(m)}])_{m\in\N}\subseteq\phi(K_i)$, $(\lambda_m)_{m\in\N}\subseteq\C^\times$ such that $\lambda_my^{(m)}\to z$ as $m\to\infty$. So, $(\pi([y^{(m)}]))_{m\in\N}\subseteq\pi(\phi(K_i))$ and $\lambda_m(y^{(m)}_0,\ldots,y_{n+i}^{(m)})\to (z_0,\ldots,z_{n+i})$ as $m\to\infty$. It consequently follows $\pi([z])\in\overline{\pi(\phi(K_i))}(\R)$. Hence, $\pi([z])\in\overline{\pi(\phi(K_i)(\R))}$ by (\ref{closureinclusion}). This allows us to chose appropriate $([\tilde{y}^{(m)}])_{m\in\N}\subseteq\phi(K_i)(\R)$ and $(\gamma_m)_{m\in\N}\subseteq\C^\times$ such that $\gamma_m(\tilde{y}^{(m)}_0,\ldots,\tilde{y}^{(m)}_{n+i})\to (z_0,\ldots,z_{n+i})$ as $m\to\infty$. In particular, since it holds $((\tilde{y}^{(m)}_0,\ldots,\tilde{y}^{(m)}_{n+i}))_{m\in\N}\subseteq\R^{n+i+1}$ and $(z_0,\ldots,z_{n+i})\in\R^{n+i+1}$, we know that $\mathrm{Re}(\gamma_m)(\tilde{y}^{(m)}_0,\ldots,\tilde{y}^{(m)}_{n+i})\to (z_0,\ldots,z_{n+i})$ as $m\to\infty$. Since $[z]\not\in H$, w.l.o.g.\ $\mathrm{Re}(\gamma_m)\neq 0$ for $m\in\N$. We set $$\overline{y}^{(m)}:=\left(\tilde{y}^{(m)}_0,\ldots,\tilde{y}^{(m)}_{n+i},\frac{1}{\mathrm{Re}(\gamma_m)}z_{n+i+1},\ldots,\frac{1}{\mathrm{Re}(\gamma_m)}z_k\right)$$ and observe that $([\overline{y}^{(m)}])_{m\in\N}\subseteq\phi(K_i)(\R)$. Moreover,   
    $\mathrm{Re}(\gamma_m)\overline{y}^{(m)}\to z$ as $m\to\infty$. This proves $[z]\in\overline{\phi(K_i)(\R)}$.
    
    \vspace*{1mm}
    \noindent It remains to verify $\overline{\pi(\phi(K_{k-n}))}(\R)\subseteq\overline{\pi(\phi(K_{k-n})(\R))}$: \\
    \noindent Let $\overline{(\cdot)}^H$ denote the Euclidean (subspace) closure of the indicated subset in $\PP^k\cap H$. The continuity of $\pi$ (w.r.t.\ the Euclidean topology) and the fact that $\overline{\phi(K_{k-n})}^H=V(\PP^n)\cap H$ together imply $$\overline{\pi(\phi(K_{k-n}))}(\R)\subseteq\pi(\overline{\phi(K_{k-n})}^H)(\R)=\pi(V(\PP^n)\cap H)(\R).$$ If $\pi(V(\PP^n)\cap H)(\R)\subseteq \pi((V(\PP^n)(\R)\cap H)$, then Proposition \ref{thm:VeroneseCase} together with the continuity of $\pi$ yield
    $$\overline{\pi(\phi(K_{k-n}))}(\R)\subseteq \pi(\overline{\phi(K_{k-n})}(\R)\cap H)\subseteq\pi(\overline{\phi(K_{k-n})(\R)}^H)\subseteq \overline{\pi(\phi(K_{k-n})(\R))}.$$

    \vspace*{1mm}
    \noindent It thus remains to prove $\pi(V(\PP^n)\cap H)(\R)\subseteq \pi((V(\PP^n)(\R)\cap H)$: \\
    \noindent For $[z]\in V(\PP^n)\cap H$ with $(z_0,\ldots,z_{n+i})\in\R^{n+i+1}$, we fix some $x\in\C^{n+1}$ such that $(z_0,\ldots,z_{n+i})=(m_0(x),\ldots,m_{n+i}(x))$ and observe that there exists $0\leq j \leq n$ with $X_j^d\geq_{lex} m_{n+i}(X)$ and $x_j\neq 0$, since $[z]\in H$. We set $$j:=\min\{j\in\{0,\ldots,n\} \mid X_j^d\geq_{lex} m_{n+i}(X) \wedge x_j\neq 0\}$$ and let $0\leq s \leq n+i$ be such that $m_s(X)=X_j^d$. Hence, for $l=0,\ldots,j-1$, $x_l=0$ and $0\neq m_s(x)=x_j^d=z_s$. W.l.o.g.\ assume $z_s>0$ and fix $y\in\R^\times$ such that $z_s=y^d$. We distinguish two cases for $s$ and construct $[\tilde{x}]\in\PP^n(\R)$ such that
    $\pi([z])=\pi(V([\tilde{x}]))$: 
    
    \noindent\underline{Case 1:} If $s=n+i$, then we set $\tilde{x}$ to be the vector in $\R^{n+1}$ with $\tilde{x}_s:=y$ and zeros elsewhere. Consequently, $(z_0,\ldots,z_{n+i})=(m_0(\tilde{x}),\ldots,m_{n+i}(\tilde{x}))$. \\
    \noindent\underline{Case 2:} If $s<n+i$, then we set $r:=\min\{n-j,n+i-s\}$. For $l=1,\ldots,r$, we have $z_{s+l}=x_j^{d-1}x_{j+l}=x_j^d\frac{x_{j+l}}{x_j}\in\R$. Since $x_j^d=z_s\in\R$, it follows that $\lambda_l:=\frac{x_{j+l}}{x_j}\in\R$. We conclude $(z_0,\ldots,z_{n+i})=(m_0(\tilde{x}),\ldots,m_{n+i}(\tilde{x}))$ for $\tilde{x}:=(\tilde{x}_0,\ldots,\tilde{x}_n)\in\R^{n+1}$ defined by 
    $$\tilde{x}_t:=\begin{cases} y, & \mathrm{for\ } t=j \\
                                \lambda_{t-j} y, & \mathrm{for\ } t\in\{j+1,\ldots,j+r\} \\
                                0, & \mathrm{else} \end{cases}$$
    \end{proof}

\noindent Theorem \ref{cor:extendingCi} now follows from Theorem \ref{cor:conescoincidevariety} and Propositions \ref{thm:VeroneseCase}, \ref{thm:KiCase}.
\section{Computing $i(f)$ for PSD-extremal Circuit Forms} \label{if}

Recall that for $f\in\Delta_{n+1,2d}$, there exists a unique $i(f)\in\{0,\ldots,k-n-1\}$ such that $f\in C_{i(f)+1}\backslash C_{i(f)}$. The aim of this section is to prove the second main result (Theorem \ref{cor:greatestcone}) of this paper by computing $i(f)$ for a PSD-extremal $f\in\Delta_{n+1,2d}^{\mathfrak{C}}$. To this end, for $i=0,\ldots, k-n$ and $f\in\mathcal{P}_{n+1,2d}$, we firstly state and prove a sufficient condition in Theorem \ref{thm:suffCi} that if $f\in \mathfrak{S}_{n+i}$ (recall that $\mathfrak{S}_{n+i}:=\mathrm{span}_{\R}\{m_sm_t \mid 0\leq s,t\leq n+i\}$), then $f\in C_{i}$. In Proposition \ref{prop:circuitsatisfysuff}, we then verify that if $f$ is a circuit form (with outer exponents $2\alpha_{j_0},\ldots,2\alpha_{j_r}$), then $f\in\mathfrak{S}_{j(f)}$ (recall that $j(f):=\max\{j_0,\ldots,j_r\}$). This allows us to compute an upper bound for $i(f)$ in Theorem \ref{thm:Containment} and Corollary \ref{cor:upperbound}. Secondly, we show, in Theorem \ref{thm:exclusion}, that this upper bound is sharp if $f$ is PSD-extremal.

\begin{thm} \label{thm:suffCi} Let $n,d\geq 1$ and $i=0,\ldots,k-n$, then $$C_{i}\cap \mathfrak{S}_{n+i}=\mathcal{P}_{n+1,2d}\cap \mathfrak{S}_{n+i}.$$
\end{thm}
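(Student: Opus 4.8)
We want to show $C_i \cap \mathfrak{S}_{n+i} = \mathcal{P}_{n+1,2d} \cap \mathfrak{S}_{n+i}$. Since $C_i \subseteq \mathcal{P}_{n+1,2d}$ always, the inclusion "$\subseteq$" is immediate, so the content is the reverse inclusion: if $f \in \mathcal{P}_{n+1,2d}$ lies in the span of $\{m_s m_t : 0 \le s,t \le n+i\}$, then $f \in C_i$.

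Let me think about what $C_i$ means. By definition $C_i = C_{V_i} = \{f : \exists A \in \mathcal{G}^{-1}(f),\ q_A|_{V_i(\mathbb{R})} \ge 0\}$. By Theorem A, $C_i = C_{\phi(K_i)}$, so we need a Gram matrix $A$ of $f$ with $q_A$ nonnegative on $\phi(K_i)(\mathbb{R})$. Recall $\phi(K_i)$ consists of points $[1:z_1:\dots:z_k]$ where the first $n+i+1$ coordinates are Veronese-compatible, i.e., $(1,z_1,\dots,z_{n+i}) = (m_0(x),\dots,m_{n+i}(x))$ for some $x$, and the remaining coordinates $z_{n+i+1},\dots,z_k$ are free. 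So on $\phi(K_i)$, the first block of coordinates behaves like a genuine Veronese point while the tail is unconstrained.

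**Key idea.** Given $f \in \mathcal{P}_{n+1,2d} \cap \mathfrak{S}_{n+i}$, write $f = \sum_{0 \le s,t \le n+i} c_{st} m_s m_t$. This means $f$ has a Gram matrix $A$ supported entirely in the top-left $(n+i+1) \times (n+i+1)$ block — take $A = (c_{st})$ padded with zero rows and columns. Then $q_A(Z) = \sum_{0 \le s,t \le n+i} c_{st} Z_s Z_t$ depends only on $Z_0,\dots,Z_{n+i}$. On $\phi(K_i)(\mathbb{R})$, a point is $[1:z_1:\dots:z_k]$ with $(1,z_1,\dots,z_{n+i}) = (m_0(x),\dots,m_{n+i}(x))$ for some $x \in \mathbb{R}^{n+1}$ (after dealing with the closure / the real-point subtleties — here is where one may need to invoke Proposition~\ref{thm:KiCase} and $\phi(K_i)(\mathbb{R})$'s description, or just work with $H_i$ and pass to the closure via Lemma~\ref{prop:extension}). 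Evaluating, $q_A(1,z_1,\dots,z_{n+i},\dots) = \sum c_{st} m_s(x) m_t(x) = f(x) \ge 0$ since $f$ is PSD. Hence $q_A|_{\phi(K_i)(\mathbb{R})} \ge 0$ (using that the tail coordinates don't enter $q_A$ at all), so $f \in C_{\phi(K_i)} = C_i$.

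**Main obstacle.** The delicate point is matching the description of $\phi(K_i)(\mathbb{R})$ with genuine real Veronese data in the first block: a priori a real point of $\phi(K_i)$ arises as a limit (Zariski/Euclidean closure of $H_i$), and the $x \in \mathbb{C}^{n+1}$ realizing the first $n+i$ coordinates need not be real. However — and this is exactly what Proposition~\ref{thm:KiCase} and its proof secure — one only needs $q_A$ nonnegative on $\overline{\phi(K_i)(\mathbb{R})}$, and by that proposition $\overline{\phi(K_i)(\mathbb{R})} = \overline{\phi(K_i)}(\mathbb{R}) = V_i(\mathbb{R})$; moreover, by the argument there (the $\pi$-projection analysis), the first $n+i+1$ coordinates of any real point of $V_i$ can be realized by a \emph{real} $\tilde x$. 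So it suffices to check nonnegativity of $q_A$ on $\phi(H_i)(\mathbb{R})$ restricted through that real realization, which reduces to $f(\tilde x) \ge 0$. Thus the plan is: (1) observe "$\subseteq$" is trivial; (2) given $f \in \mathcal{P} \cap \mathfrak{S}_{n+i}$, build the block-supported Gram matrix $A$; (3) note $q_A$ ignores coordinates $Z_{n+i+1},\dots,Z_k$; (4) evaluate $q_A$ on the real points of $\phi(K_i)$, using the real-realizability of the first block (from the proof of Proposition~\ref{thm:KiCase}) together with Theorem~A, to get $q_A = f \circ (\text{real pt}) \ge 0$; (5) conclude $f \in C_i$. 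The only real work is step (4)'s bookkeeping, and that has essentially been done already in Section~\ref{Cone:Description}.
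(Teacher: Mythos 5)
Your proposal is correct and matches the paper's proof: both take the Gram matrix of $f$ supported in the top-left $(n+i+1)\times(n+i+1)$ block, evaluate the resulting quadratic form on real points of $\phi(K_i)$ (whose first $n+i+1$ coordinates are realized by a real $x\in\R^{n+1}$, thanks to the explicit graph-like defining equations of $K_i$ on the chart $z_0=1$), obtain $q_A(z)=f(x)\ge 0$, and conclude via Theorem \ref{cor:extendingCi}. The real-realizability point you flag as the "main obstacle" is indeed the only subtlety, and the paper handles it exactly as you indicate, by reducing to $\phi(K_i)(\R)$ via Theorem \ref{cor:extendingCi}.
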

    \begin{proof} ($\subseteq$) Clear. \\
    ($\supseteq$) For $f\in\mathcal{P}_{n+1,2d}\cap\mathfrak{S}_{n+i}$, we fix a Gram matrix $A:=(a_{s,t})_{0\leq s,t\leq k}$ associated to $f$ such that $a_{s,t}= 0$ whenever $n+i<\max\{s,t\}$. For $[z]\in\phi(K_{i})(\R)$, we fix $x\in\R^{n+1}$ such that $(z_0,\ldots,z_{n+i})=(m_0(x),\ldots,m_{n+i}(x))$. We compute $q_{A}(z)=f(x)\geq 0$ and conclude $f\in C_{\phi(K_i)}=C_i$ by Theorem \ref{cor:extendingCi}.
    \end{proof}

\begin{prop} \label{prop:circuitsatisfysuff} Let $n,d\geq 1$ and $f\in\mathfrak{C}_{n+1,2d}$, then $f\in\mathfrak{S}_{j(f)}$.
\end{prop}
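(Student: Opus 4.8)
The plan is to unravel what it means for a circuit form $f$ with outer exponents $2\alpha_{j_0},\dots,2\alpha_{j_r}$ and inner exponent $b$ to lie in $\mathfrak{S}_{j(f)}$, i.e.\ to write $f$ as an $\R$-linear combination of products $m_sm_t$ with $0\le s,t\le j(f)$. First I would recall that $f = \sum_{l=0}^r f_{a(l)}X^{a(l)} + f_b X^b$ where $a(l)=2\alpha_{j_l}$, so each outer term $X^{a(l)} = (X^{\alpha_{j_l}})^2 = m_{j_l}^2 = m_{j_l}m_{j_l}$; since $j_l\le j(f)$ for all $l$, these already lie in $\mathfrak{S}_{j(f)}$. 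So the whole issue reduces to showing that the single monomial $X^b$ can be written as $m_sm_t$ for suitable $s,t\le j(f)$ — equivalently, that $b$ can be split as $b=\alpha_s+\alpha_t$ with $\alpha_s,\alpha_t\in\N_0^{n+1}$, $|\alpha_s|=|\alpha_t|=d$, and both $s,t\le j(f)$.

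The key step is to exploit condition (3) in the definition of a circuit form: $b=\sum_{l=0}^r\lambda_l a(l)$ with $\lambda_l>0$, $\sum\lambda_l=1$. Equivalently $b = \sum_{l=0}^r (2\lambda_l)\,\alpha_{j_l}$ and $\sum_l 2\lambda_l = 2$. Intuitively, $b/2$ is a convex combination (with weights $2\lambda_l$ summing to $2$, i.e.\ an average of two points) of the exponents $\alpha_{j_l}$. The natural move is to split the multiset of "mass" carried by $b$ into two halves each of total degree $d$. Concretely, I would argue that since $|b|=2d$ and $b\in\N_0^{n+1}$, one can choose $\alpha_s, \alpha_t\in\N_0^{n+1}$ with $\alpha_s+\alpha_t=b$, $|\alpha_s|=|\alpha_t|=d$, and such that $\alpha_s$ and $\alpha_t$ both lie coordinatewise "between" the extreme outer exponents; in particular, because $b$ is in the convex hull of $\{2\alpha_{j_0},\dots,2\alpha_{j_r}\}$, any such balanced splitting yields half-exponents in the convex hull of $\{\alpha_{j_0},\dots,\alpha_{j_r}\}$, hence (using that the lexicographic order behaves monotonically under the relevant operations) with lex-index at most $\max_l j_l = j(f)$. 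This last point — that a half-splitting of a point in the convex hull of the $\alpha_{j_l}$ has lex-index $\le j(f)$ — is the heart of the matter: it uses that $\alpha_0,\dots,\alpha_{k}$ is the lex-ordering of all degree-$d$ exponents, so "index $\le j(f)$" is exactly "lex-$\ge \alpha_{j(f)}$ is false", i.e.\ lex-$\le \alpha_{j(f)}$, and one checks that averaging/splitting exponents cannot push the result lexicographically past the largest of the $\alpha_{j_l}$.

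I expect the main obstacle to be precisely the combinatorial bookkeeping in that last step: showing a concrete balanced splitting $b=\alpha_s+\alpha_t$ exists with $s,t\le j(f)$, rather than merely that $b$ lies in the real convex hull. The integrality of $b$ and the $\lambda_l$ being rational with a common structure should make this routine, but one has to be careful that the halves are genuinely integer vectors of degree $d$ (a parity/greedy argument: process coordinates left to right, assigning to $\alpha_s$ as much as possible subject to $|\alpha_s|\le d$, which automatically keeps $\alpha_s$ lex-large and $\alpha_t$ lex-small, both bounded by the lex-extremes of the support of $f$). Once $X^b=m_sm_t$ with $s,t\le j(f)$ is established, combining it with the outer terms $X^{a(l)}=m_{j_l}^2\in\mathfrak{S}_{j(f)}$ gives $f\in\mathfrak{S}_{j(f)}$ immediately, completing the proof.
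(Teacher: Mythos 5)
Your reduction is correct: the outer terms $X^{2\alpha_{j_l}}=m_{j_l}^2$ lie in $\mathfrak{S}_{j(f)}$ for free, and since $\mathfrak{S}_{j(f)}$ is spanned by monomials, the claim is equivalent to splitting $b=\alpha_s+\alpha_t$ with both $s,t\leq j(f)$, i.e.\ with both halves $\geq_{lex}\alpha_{j(f)}$. Your remark that a convex combination of the $\alpha_{j_l}$ is lex-$\geq$ their lex-minimum $\alpha_{j(f)}$ is also true. But the step you yourself call ``the heart of the matter'' is where the entire proof lives, and both mechanisms you propose for it fail. Take $n+1=4$, $d=2$ and the circuit form $f=X_0^2X_2^2+X_1^2X_3^2+f_bX_0X_1X_2X_3$: here the outer exponents are $2\alpha_2=(2,0,2,0)$ and $2\alpha_6=(0,2,0,2)$, so $j(f)=6$, $\alpha_{j(f)}=(0,1,0,1)$ and $b=(1,1,1,1)$. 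Your left-to-right greedy splitting gives $b=(1,1,0,0)+(0,0,1,1)$, i.e.\ $X^b=m_1m_8$ with $m_8=X_2X_3$ and $8>6=j(f)$; indeed $(0,0,1,1)<_{lex}(0,1,0,1)$, so this splitting leaves $\mathfrak{S}_{j(f)}$. The same example refutes ``any balanced splitting yields half-exponents in the convex hull of $\{\alpha_{j_0},\ldots,\alpha_{j_r}\}$'': that hull is the segment from $(1,0,1,0)$ to $(0,1,0,1)$, whose only lattice points are its endpoints, and $(1,1,0,0)$ is not one of them. The valid splitting here is $X^b=m_2m_6=(X_0X_2)(X_1X_3)$. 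The underlying issue is that, unlike averaging, splitting an exponent into two integral halves can push one half lexicographically below $\alpha_{j(f)}$, so neither ``greedy'' nor ``any balanced splitting'' settles the matter.

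So you have correctly isolated the combinatorial statement that must be proved, but you have not proved it, and the existence of a good integral splitting is genuinely not routine. The paper establishes it by a double induction, first on $d$ and then on $n$: when $X_0^2$ divides $X^b$ (in particular when $j(f)\leq k(n,d-1)$) one factors out $X_0^2$ and invokes the induction on $d$; otherwise $X^b=X_0\,m(X_1,\ldots,X_n)$ and a coordinate-by-coordinate comparison of $b$ with $\alpha_{j(f)}$, driven by the barycentric identity $b=2\sum_l\lambda_l\alpha_{j_l}$ and by partitioning the outer exponents according to whether $\alpha_{j_l,0}\geq 1$, forces either a strict inequality $b_1>\alpha_{j(f),1}$ (which produces the required splitting) or a rigid equality case that is pushed to the next coordinate and eventually absorbed by the induction on $n$. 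To salvage your approach you would need to prove, rather than assert, that some splitting with both halves lex-$\geq\alpha_{j(f)}$ exists; as it stands this is a genuine gap.
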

    \begin{proof} We verify the assertion by an induction on $d\geq 1$. The inductive base case $d=1$ is clear. Thus, we assume that the assertion was already verified up to some $d\geq 1$ and investigate the situation for $d+1$. If $j(f)\leq k(n,d)$, then $f(X)=X_0^2g(X)$ for some $g\in\mathfrak{C}_{n+1,2d}$. Since the inductive assumption can be applied to $g$ and $f(X)=X_0^2g(X)$, $f\in\mathfrak{S}_{j(f)}$ follows. 
    
    However, for $j(f)\geq k(n,d)+1$, we set $$l(f):=\max\{l\in\{1,\ldots,n\}\mid X_l^d\geq_{lex} m_{j(f)}(X)\}$$ and argue by an induction on $n\geq 1$ that $f$ belongs to $\mathfrak{S}_{j(f)}$. For the inductive base case, $n=1$ and $j(f)=k(n,d+1)$. The assertion thus immediately follows. We therefore assume that the assertion was already verified up to some $n\geq 1$ (i.e., for circuit forms in up to $n+1$ variables of degree $2(d+1)$) and investigate the situation for $n+1$. 

    For $f\in\mathfrak{C}_{n+2,2(d+1)}$, we write $$f(X)=\sum\limits_{l=0}^r f_{2\alpha_{j_l}}X^{2\alpha_{j_l}}+f_bX^{b}$$ where $b=2\sum\limits_{l=0}^r \lambda_l \alpha_{j_l}$ for some $\lambda_0,\ldots,\lambda_r>0$ such that $\sum\limits_{l=0}^r \lambda_l = 1$. If $j_0,\ldots,j_r\geq k(n+1,d)+1$, then $f$ can be interpreted as a circuit form in $n+1$ variables of degree $2(d+1)$ and the assertion follows by the inductive assumption. Therefore, we now assume that there exists some $0\leq \sigma \leq r$ such that $0\leq j_\sigma\leq k(n+1,d)$. Consequently, $X_0$ divides $X^b$. If $X_0^2$ divides $X^b$, then we are done. Hence, we now only consider the case when $X^b=X_0m(X_1,\ldots,X_{n+1})$ for some monomial $m$ of degree $2d+1$. 

    If $l(f)\geq 2$ and $X_1$ divides $X^b$, then we are done. However, if $l(f)\geq 2$ and $X_1$ does not divide $X^b$, then $f$ can be interpreted as a circuit form in $n+1$ variables of degree $2(d+1)$ and we are done by the inductive assumption.
    Therefore, we now assume $l(f)=1$ and set $I:=\{l\in\{0,\ldots,r\} \mid \alpha_{j_l,0}\geq 1\}$ (which is non-empty since $X_0$ divides $X^b$) and $I^\prime:=\{0,\ldots,r\}\backslash I$. It follows
    \begin{equation} \label{b0} 1=b_0= 2\sum\limits_{l\in I}\lambda_l\alpha_{j_l,0}\geq 2\sum\limits_{l\in I}\lambda_l.
    \end{equation} Consequently, $\sum\limits_{l\in I^\prime}\lambda_l\geq \frac{1}{2}\geq \sum\limits_{l\in I}\lambda_l$. Let us distinguish three cases:

    \noindent \underline{Case 1}: If $\alpha_{j_l,1}\geq 1$ for some $l\in I$, then we are done since $$b_1 > 2\sum\limits_{l\in I^\prime} \lambda_l\alpha_{j_l,1} \geq 2\left(\sum\limits_{l\in I^\prime}\lambda_l\right)\alpha_{j(f),1}\geq \alpha_{j(f),1}.$$
    \noindent\underline{Case 2}: If $\alpha_{j_l,1}>\alpha_{j(f),1}$ for some $l\in I^\prime$, then we are done since $$b_1\geq 2\sum\limits_{l\in I^\prime}\lambda_l\alpha_{j_l,1} > 2\left(\sum\limits_{l\in I^\prime}\lambda_l\right)\alpha_{j(f),1}\geq \alpha_{j(f),1}.$$
    \noindent \underline{Case 3}: If we are in neither of the above two cases, then $$b_1=2\sum\limits_{l\in I^\prime}\lambda_l\alpha_{j_l,1}=2\left(\sum\limits_{l\in I^\prime}\lambda_l\right)\alpha_{j(f),1}\geq\alpha_{j(f),1}.$$ If the inequality is strict, then we are done. Else, $\sum\limits_{l\in I^\prime}\lambda_l=\frac{1}{2}=\sum\limits_{l\in I}\lambda_l$ and $\alpha_{j_l,0}=1$ for $l\in I$ by (\ref{b0}). Summing up, $(\alpha_{j_l,0},\alpha_{j_l,1})=(1,0)$ for $l\in I$ and $(\alpha_{j_l,0},\alpha_{j_l,1})=(0,\alpha_{j(f),1})$ for $l\in I^\prime$. We iterate the above argument for $b_2,\ldots,b_{n+1}$ and necessarily terminate in one of the solved first two cases. Else, $\alpha_{j_l}=(1,0,\ldots,0)$ for $l\in I$ which is impossible since $d+1\geq 2$.
    \end{proof}
    
\begin{thm} \label{thm:Containment} Let $n,d\geq 1$ and $f\in\mathcal{P}_{n+1,2d}^{\mathfrak{C}}$. The following are true:
    \begin{enumerate} 
    \item If $0\leq j(f)\leq n-1$, then $f\in\Sigma_{n+1,2d}^\mathfrak{C}$.
    \item If $n\leq j(f)\leq k(n,d)$, then $f\in C_{j(f)-n}$.
    \end{enumerate}
\end{thm}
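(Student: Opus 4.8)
The plan is to deduce both parts directly from Proposition~\ref{prop:circuitsatisfysuff} together with Theorem~\ref{thm:suffCi}, after making the bottom spaces $\mathfrak{S}_i$ of the filtration completely explicit.

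I would dispose of (2) first, since it is immediate. Given $f\in\mathcal{P}_{n+1,2d}^{\mathfrak{C}}$ with $n\le j(f)\le k(n,d)$, set $i:=j(f)-n$, so that $0\le i\le k(n,d)-n=k-n$. Proposition~\ref{prop:circuitsatisfysuff} gives $f\in\mathfrak{S}_{j(f)}=\mathfrak{S}_{n+i}$, hence $f\in\mathcal{P}_{n+1,2d}\cap\mathfrak{S}_{n+i}$. By Theorem~\ref{thm:suffCi}, $\mathcal{P}_{n+1,2d}\cap\mathfrak{S}_{n+i}=C_i\cap\mathfrak{S}_{n+i}\subseteq C_i$, and therefore $f\in C_{j(f)-n}$.

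For (1) I would first unwind $\mathfrak{S}_{n-1}$. In the lexicographic ordering of the monomial basis fixed above one has $m_0=X_0^d$ and $m_s=X_0^{d-1}X_s$ for $s=1,\dots,n$. Hence the spanning set $\{m_sm_t\mid 0\le s,t\le n-1\}$ of $\mathfrak{S}_{n-1}$ consists precisely of the monomials $X_0^{2d}$, the $X_0^{2d-1}X_t$ with $1\le t\le n-1$, and the $X_0^{2d-2}X_sX_t$ with $1\le s,t\le n-1$; dividing out the common factor $X_0^{2d-2}$ identifies $\mathfrak{S}_{n-1}$ with $X_0^{2d-2}\cdot\mathcal{Q}$, where $\mathcal{Q}$ denotes the space of quadratic forms in the variables $X_0,\dots,X_{n-1}$. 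Now assume $0\le j(f)\le n-1$. By Proposition~\ref{prop:circuitsatisfysuff}, $f\in\mathfrak{S}_{j(f)}\subseteq\mathfrak{S}_{n-1}$, so $f=X_0^{2d-2}q$ for a unique $q\in\mathcal{Q}$. Since $f$ is PSD and $X_0^{2d-2}>0$ on the dense subset $\{x_0\neq 0\}$ of $\R^{n+1}$, the form $q$ is non-negative there, hence PSD on all of $\R^n$ by continuity. Diagonalizing the symmetric matrix of $q$ we may write $q=\sum_l\ell_l^2$ with each $\ell_l$ a linear form in $X_0,\dots,X_{n-1}$, so that $f=\sum_l(X_0^{d-1}\ell_l)^2\in\Sigma_{n+1,2d}$. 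As $f$ is a circuit form by hypothesis, this gives $f\in\Sigma_{n+1,2d}^{\mathfrak{C}}$.

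There is no real obstacle here once Proposition~\ref{prop:circuitsatisfysuff} and Theorem~\ref{thm:suffCi} are available: part (2) is a one-line combination of the two, and the only point in part (1) needing any care is the combinatorial bookkeeping identifying $\mathfrak{S}_{n-1}$ with $X_0^{2d-2}$ times the quadratic forms in the first $n$ variables — after that, the standard facts that a non-negative quadratic form is a sum of squares of linear forms and that non-negativity on a dense set passes to the closure finish the proof.
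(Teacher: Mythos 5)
Your proposal is correct and follows essentially the same route as the paper: part (2) is the one-line combination of Proposition~\ref{prop:circuitsatisfysuff} with Theorem~\ref{thm:suffCi}, and part (1) factors $f$ as $X_0^{2d-2}$ times a PSD quadratic form, which is then a sum of squares of linear forms. The only difference is that you spell out the density argument and the diagonalization where the paper simply invokes $\mathcal{P}_{n+1,2}^{\mathfrak{C}}=\Sigma_{n+1,2}^{\mathfrak{C}}$.
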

    \begin{proof} Proposition \ref{prop:circuitsatisfysuff} yields $f\in\mathfrak{S}_{j(f)}$.
    \begin{enumerate}
    \item If $0\leq j(f)\leq n-1$, then $\mathfrak{S}_{j(f)}\subseteq\mathfrak{S}_{n+1}$. Hence, $f(X)=X_0^{2d-2}g(X)$ for some $g\in\mathcal{P}_{n+1,2}^\mathfrak{C}=\Sigma_{n+1,2}^\mathfrak{C}$. Consequently, $f\in\Sigma_{n+1,2d}^\mathfrak{C}$.
    \item If $n\leq j(f)\leq k(n,d)$, then $f\in\mathcal{P}_{n+1,2d}\cap\mathfrak{S}_{j(f)}$ yields $f\in C_{j(f)-n}\cap\mathfrak{S}_{j(f)}$ by Theorem \ref{thm:suffCi}. 
    \end{enumerate}
    \end{proof}

\begin{cor} \label{cor:upperbound} Let $n,d\geq 2$, $(n+1,2d)\neq (3,4)$ and $f\in\Delta_{n+1,2d}^{\mathfrak{C}}$, then $i(f)\leq j(f)-n-1$ and the following are true:
\begin{enumerate}
    \item If $n=2$, then $j(f)\geq 6$.
    \item If $n\geq 3$, then $j(f)\geq 2n+1$.
\end{enumerate}
\end{cor}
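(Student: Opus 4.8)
The plan is to read off all three assertions from Theorem~\ref{thm:Containment} and Proposition~\ref{prop:circuitsatisfysuff} above, together with the collapse of the bottom of the filtration proved in \cite[Theorem A]{GHK}, namely that $C_0=\ldots=C_n$ when $n\ge 3$ and $C_0=C_1=C_2=C_3$ when $n=2$. The whole argument amounts to combining an \emph{upper} bound on $i(f)$ coming from the containment results with a \emph{lower} bound coming from the collapse result, both extracted from the monotonicity of the chain $C_0\subseteq\ldots\subseteq C_{k-n}$; I do not expect any genuinely hard step, only some bookkeeping.

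First I would establish $i(f)\le j(f)-n-1$. Since $f$ is a circuit form, its outer exponents are $2\alpha_{j_0},\ldots,2\alpha_{j_r}$ with all $j_l\in\{0,\ldots,k\}$, so $0\le j(f)\le k$ automatically. Because $f\notin\Sigma_{n+1,2d}$, Theorem~\ref{thm:Containment}(1) excludes $j(f)\le n-1$, hence $j(f)\ge n$; then Theorem~\ref{thm:Containment}(2) applies and gives $f\in C_{j(f)-n}$, and $f\notin C_0=\Sigma_{n+1,2d}$ forces $j(f)-n\ge 1$. Now $i(f)+1$ is, by its definition, the least index $m$ with $f\in C_m$; since the $C_m$ increase with $m$ and $f\in C_{j(f)-n}$, I conclude $i(f)+1\le j(f)-n$, i.e.\ $i(f)\le j(f)-n-1$.

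Finally I would feed in the collapse result to get (1) and (2). If $n\ge 3$, then $C_n=C_0=\Sigma_{n+1,2d}$ does not contain $f$, so $i(f)+1\ge n+1$; combined with $i(f)\le j(f)-n-1$ this gives $n\le j(f)-n-1$, i.e.\ $j(f)\ge 2n+1$. If $n=2$, then $C_3=C_0=\Sigma_{3,2d}$ does not contain $f$, so $i(f)+1\ge 4$; combined with $i(f)\le j(f)-n-1=j(f)-3$ this gives $j(f)\ge 6$. The only points that require care are checking that Theorem~\ref{thm:Containment} is applicable (which needs $n\le j(f)\le k$, verified above) and using the chain monotonicity correctly in both directions. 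The hypothesis $(n+1,2d)\neq(3,4)$ plays no role in the deduction itself; it only guarantees that the statement is not vacuous, $\Delta_{3,4}$ being empty — and indeed for $n=2,d=2$ one has $k(2,2)=5<6$, so the bound $j(f)\ge 6$ could only hold vacuously there.
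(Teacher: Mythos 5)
Your argument is correct and is exactly the paper's intended proof: the paper's proof is the one-liner ``Clear from \cite[Theorem A]{GHK} and Theorem~\ref{thm:Containment},'' and you have simply spelled out the bookkeeping (upper bound on $i(f)$ from Theorem~\ref{thm:Containment}, lower bound from the collapse $C_0=\ldots=C_n$, resp.\ $C_0=\ldots=C_3$). Your side remark about $(n+1,2d)\neq(3,4)$ is also accurate.
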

\begin{proof} Clear from \cite[Theorem A]{GHK} and Theorem \ref{thm:Containment}.
\end{proof}

\begin{thm} \label{thm:exclusion} Let $n,d\geq 1$ and $f\in\Delta_{n+1,2d}^\mathfrak{C}$. If $f$ is PSD-extremal,
     then $i(f)\geq j(f)-n-1$.
\end{thm}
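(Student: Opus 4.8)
The plan is to prove the contrapositive-flavoured statement directly: assuming $f\in\Delta_{n+1,2d}^{\mathfrak C}$ is PSD-extremal with outer exponents $2\alpha_{j_0},\dots,2\alpha_{j_r}$ and $j(f)=\max\{j_l\}$, I want to show $f\notin C_{j(f)-n-1}$, equivalently $f\notin C_{\phi(K_{j(f)-n-1})}$ by Theorem \ref{cor:extendingCi}. Write $i:=j(f)-n-1$ and suppose for contradiction that there is a Gram matrix $A=(a_{s,t})$ associated to $f$ with $q_A\vert_{\phi(K_i)(\R)}\ge 0$. The idea is to exploit that $\phi(K_i)(\R)$ contains the image under $V$ of the point(s) coming from the outer exponent $\alpha_{j(f)}$: because the outer exponents are the vertices of the Newton polytope and $\alpha_{j(f)}$ is one of them, the monomial $m_{j(f)}=X^{\alpha_{j(f)}}$ appears, and $2\alpha_{j(f)}$ is not contained in the relevant "truncated" span $\mathfrak S_{n+i}=\mathfrak S_{j(f)-1}$. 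So the diagonal entry $a_{j(f),j(f)}$ cannot be read off the truncation and must be handled via the kernel description (\ref{Kern}) of $\mathcal G^{-1}(f)$.

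Concretely, I would first reduce to the case $j(f)\ge n+1$ (if $j(f)\le n$, Theorem \ref{thm:Containment}(1) already gives $f\in\Sigma_{n+1,2d}$, contradicting $f\in\Delta_{n+1,2d}$, so $i\ge 1$ is forced and the statement is non-vacuous). Next, using the explicit coordinates: a point of $K_i$ is a tuple $(z_1,\dots,z_k)$ satisfying $q_1=\dots=q_i=0$ but with $z_{n+i+1},\dots,z_k$ free; in particular one can take $z_j=m_j(x)$ for $j\le n+i$ coming from an actual $x\in\R^{n+1}$ and then let the remaining coordinates, including $z_{j(f)}$, range freely over $\R$. Evaluating $q_A$ at such a point and sending the free coordinate $z_{j(f)}\to\pm\infty$ forces $a_{j(f),j(f)}\ge 0$ and, pairing $z_{j(f)}$ against the $x$-dependent coordinates, forces a family of further sign conditions. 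Rearranging these, one extracts from $A$ a decomposition of $f$: roughly, $f = X^{2\alpha_{j(f)}}\cdot(\text{nonneg constant}) + (\text{a PSD form})$, or more precisely $f = f_1 + f_2$ with $f_1,f_2\in\mathcal P_{n+1,2d}$ where $f_1$ is a nonzero multiple of a PSD circuit form supported on a strictly smaller exponent set (the "inner" part, obtained by deleting the vertex $2\alpha_{j(f)}$) and $f_2$ carries the pure power $X^{2\alpha_{j(f)}}$. Since $2\alpha_{j(f)}$ is a genuine vertex of $\mathrm{Newton}(f)$, neither $f_1$ nor $f_2$ is a scalar multiple of $f$, contradicting PSD-extremality.

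The main obstacle I anticipate is the bookkeeping in the middle step: turning "$q_A\ge 0$ on $\phi(K_i)(\R)$ with $z_{j(f)}$ free" into an honest additive splitting $f=f_1+f_2$ inside $\mathcal P_{n+1,2d}$ with both summands genuinely non-proportional to $f$. One has to be careful that the summand "absorbing" the free coordinate is itself PSD (this is where the $z_{j(f)}\to\infty$ limit, giving $a_{j(f),j(f)}\ge 0$, plus a Schur-complement / completing-the-square argument on the $2\times 2$ (or larger) block involving index $j(f)$, does the work), and that after completing the square the residual form is supported away from the vertex $2\alpha_{j(f)}$, hence a proper PSD summand. A secondary subtlety is that $\alpha_{j(f)}$ could a priori be a non-vertex outer exponent if the circuit-form definition allowed it — but condition (1) in the definition of circuit form pins $\mathrm{vert}(A)=\{a(0),\dots,a(r)\}=\{2\alpha_{j_0},\dots,2\alpha_{j_r}\}$, so $2\alpha_{j(f)}$ is indeed a vertex, which is exactly what makes the two summands non-proportional to $f$. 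Combining this with the upper bound $i(f)\le j(f)-n-1$ from Corollary \ref{cor:upperbound} then yields Theorem \ref{cor:greatestcone}.
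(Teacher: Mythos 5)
Your overall strategy is the one the paper uses: pass to a Gram matrix $A$ with $q_A\geq 0$ on $V_{j(f)-n-1}(\R)$, exploit that the coordinate $z_{j(f)}$ is free there, complete the square, and contradict PSD-extremality via the coefficient of $X^{2\alpha_{j(f)}}$. But two steps in your sketch have genuine gaps. First, you never deal with the Gram entries $a_{s,t}$ with $\max\{s,t\}>j(f)$ (equivalently, with the other free coordinates $z_{j(f)+1},\ldots,z_k$, which also enter $q_A$ quadratically). The paper spends the first half of its proof showing that all rows and columns of $A$ indexed above $j(f)$ vanish, by a downward induction starting from $a_{k,k}=0$ and using that the linear-in-$Z_k$ part of a PSD univariate polynomial must vanish. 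This is not optional bookkeeping: if you complete the square only in $z_{j(f)}$ while those entries survive, the residual summand still contains contributions $a_{s,t}m_sm_t$ with $s<j(f)<t$ and $\alpha_s+\alpha_t=2\alpha_{j(f)}$ (such representations exist in general; e.g.\ $2(1,1,1)=(2,1,0)+(0,1,2)$), so its $X^{2\alpha_{j(f)}}$-coefficient need not vanish, and likewise $a_{j(f),j(f)}$ need not equal the (positive) coefficient of $X^{2\alpha_{j(f)}}$ in $f$. Here the operative fact is not that $2\alpha_{j(f)}$ is a Newton-polytope vertex but the lexicographic one: any $\alpha_s+\alpha_t=2\alpha_{j(f)}$ with $s\neq t$ forces $\max\{s,t\}>j(f)$, so once those rows/columns are killed the coefficient is exactly $a_{j(f),j(f)}$ and the residual misses the vertex monomial.

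Second, your closing step does not yet close. Extremality only says each PSD summand is $\lambda_i f$ with $\lambda_i\geq 0$, and $\lambda_1=0$ is allowed; if the vertex-free summand $f_1$ is the zero form there is no contradiction with extremality at all. So you must rule out $f_1=0$, and the only way to do so is to invoke $f\notin\Sigma_{n+1,2d}$: in the paper's (symmetrized) decomposition $f_1=\tfrac12\bigl(f-g^2\bigr)$ with $g=\sum_{i<j(f)}a_{i,j(f)}m_i+m_{j(f)}$, the conclusion $f_1=0$ means $f=g^2$ is a perfect square, contradicting that $f$ is not SOS. Your write-up asserts that $f_1$ is ``nonzero'' and never uses the not-SOS hypothesis, so as stated the contradiction is incomplete. (Also, your rough form $f=c\,X^{2\alpha_{j(f)}}+(\text{PSD})$ is not what completing the square produces -- the summand carrying the vertex is the square $g^2$, not a multiple of the monomial -- though your ``more precise'' version is close to the correct one once the two gaps above are repaired.)
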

    \begin{proof} W.l.o.g.\ let $f_{j(f)}=1$ and assume that $i(f)< j(f)-n-1$ for a contradiction. So, $f\in C_{j(f)-n-1}$ which allows us to fix some Gram matrix $A:=(a_{i,j})_{0\leq i,j\leq k}$ associated to $f$ such that $q_A$ is locally PSD on $V_{j(f)-n-1}(\R)$. We now show that $(a_{i,j})_{0\leq i \leq j}\equiv 0$ for any $j\geq j(f)+1$: \\

    Indeed, if $j(f)=k$, then there is nothing to prove. Thus, w.l.o.g.\ we assume $j(f)<k$. By comparision of coefficients between $q_A(m_0(X),\ldots,m_k(X))$ and $f(X)$, we have that $a_{k,k}=0$. Moreover, for any $x\in\R^{n+1}$, $y\in\R$, $y\neq 0$, it follows from $[m_0(x):\ldots:m_{k-1}(x):y]\in V_{j(f)-n-1}(\R)$ that $0\leq q_A(m_0(x),\ldots,m_{k-1}(x),z)$. Hence, for some appropriate $c\in\R$, $$q_x(Z):=q_A(m_0(x),\ldots,m_{k-1}(x),Z)=\left(2\sum\limits_{i=0}^{k-1} a_{i,k}m_i(x)\right)Z+c$$ is a PSD univariate polynomial (for any fixed $x\in\R^{n+1}$) by continuity. Consequently, $a_{0,k}=\ldots=a_{k-1,k}=0$. Iteratively, the assertion follows. \\

    \noindent Since $f_{j(f)}=1$, $a_{j(f),j(f)}=1$ thus follows and we compute over $\R$ that
    \begin{eqnarray*}
        0&\leq & q_A(m_0(X),\ldots,m_{j(f)-1}(X),Z,m_{j(f)+1}(X),\ldots,m_{k}(X)) \\
        &=&q_A(m_0(X),\ldots,m_k(X))-2\left(\sum\limits_{i=0}^{j(f)-1}a_{i,j(f)}m_i(X)\right)(m_{j(f)}(X)-Z)\\
        &&-m_{j(f)}(X)^2+Z^2 \\
        &=&f(X)+\left(Z+\sum\limits_{i=0}^{j(f)-1}a_{i,j(f)}m_i(X)\right)^2-\left(\sum\limits_{i=0}^{j(f)-1}a_{i,j(f)}m_i(X)+m_{j(f)}(X)\right)^2.
    \end{eqnarray*}
    For $Z:=-\sum\limits_{i=0}^{j(f)-1}a_{i,j(f)}m_i(X)$, we obtain
    $$0\leq f(X)-\left(\sum\limits_{i=0}^{j(f)-1}a_{i,j(f)}m_i(X)+m_{j(f)}(X)\right)^2.$$ Hence, $f=f_1+f_2$ for the PSD forms 
    \begin{eqnarray*}
        f_1(X)&:=&\frac{1}{2}\left(f(X)-\left(\sum\limits_{i=0}^{j(f)-1}a_{i,j(f)}m_i(X)+m_{j(f)}(X)\right)^2\right), \\
        f_2(X)&:=&\frac{1}{2}\left(f(X)+\left(\sum\limits_{i=0}^{j(f)-1}a_{i,j(f)}m_i(X)+m_{j(f)}(X)\right)^2\right).
    \end{eqnarray*} Since $f$ is assumed to be PSD-extremal, we can thus fix $\lambda_1\geq 0$ such that $f_1=\lambda_1 f$. The monomial square $m_{j(f)}(X)^2$ has zero-coefficient on the left hand side, i.e., in $f_1$, and coefficient one in $f$. Thus, $\lambda_1=0$. This implies that $f_1$ is the zero form and, consequently, $$f(X)=\left(\sum\limits_{i=0}^{j(f)-1}a_{i,j(f)}m_i(X)+m_{j(f)}(X)\right)^2.$$ A contradiction to the assumption that $f$ is not SOS.
    \end{proof}

\noindent Theorem \ref{cor:greatestcone} thus follows from Corollary \ref{cor:upperbound} and Theorem \ref{thm:exclusion}.
However, this theorem only provides a sufficient condition to compute $i(f)$. Yet, it does not give a necessary condition as there exists a $f\in \Delta_{n+1,2d}^{\mathfrak{C}}$ such that $i(f)=j(f)-n-1$ which is not PSD-extremal as shown below:

\begin{exmp} \label{exmp:notPSDextremalform}
     Consider the PSD quaternary sextic circuit form $$\mathfrak{D}(X_0,X_1,X_2,X_3):=2X_0^4X_3^2+2X_0^2X_3^4+X_1^4X_2^2+X_1^2X_2^4-6X_0^2X_1X_2X_3^2$$ which is not PSD-extremal \cite[(9.9) Example]{Rez2}. We compute $j(\mathfrak{D})=13$. By Theorem \ref{thm:Containment}, $\mathfrak{D}\in C_{10}$. However, Theorem \ref{thm:exclusion} cannot be applied for concluding $\mathfrak{D}\not\in C_9$ since $\mathfrak{D}$ is not PSD-extremal. Yet, for any $A\in\mathcal{G}^{-1}(\mathfrak{D})$, various point evaluations of $q_A$ on $V_9(\R)$ imply that $\mathfrak{D}\not\in C_9$. It follows $i(\mathfrak{D})=9=j(\mathfrak{D})-n-1$.
\end{exmp} 
\section{Separation of the Cones $C_{n+1},\ldots,C_{k-n}$ and also $C_n$ if $n\geq 3$} \label{Degree-Jumping Principle}
The aim of this section is to prove the third main result, Theorem \ref{thm:allstrict}, of this paper. To this end, we start off by recalling two main results from \cite{GHK}:

\begin{thm} \cite[Theorem C]{GHK} \label{exmp:naryquartics}
    For $(n+1,4)_{n\geq 4}$ and $i=n,\ldots,k-n-1$, the strict inclusion $C_i\subsetneq C_{i+1}$ holds.
\end{thm}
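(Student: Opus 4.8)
The plan is to reduce the whole statement to Theorem~\ref{cor:greatestcone}. Concretely, it suffices to produce, for every index $i\in\{n,\ldots,k-n-1\}$, one PSD-extremal circuit form $f_i\in\Delta_{n+1,4}^{\mathfrak C}$ with $j(f_i)=i+n+1$. Since $d=2\geq 2$ and $(n+1,4)\neq(3,4)$ for $n\geq 4$, Theorem~\ref{cor:greatestcone} then yields $i(f_i)=j(f_i)-n-1=i$, i.e.\ $f_i\in C_{i+1}\backslash C_i$, and hence $C_i\subsetneq C_{i+1}$. So after this reduction the entire content is the construction of the forms $f_i$ together with the computation of their $j$-values (the collection $\{f_i\}$ in fact will be a complete set of separating forms for this case).

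For the building block I would use the classical PSD-not-SOS quaternary quartic of Choi and Lam \cite{CL}, which is a PSD-extremal circuit form. More generally, for pairwise distinct $p,q,r,s\in\{0,\ldots,n\}$ the form
\[ CL_{p,q,r,s}:=X_p^2X_q^2+X_q^2X_r^2+X_p^2X_r^2+X_s^4-4X_pX_qX_rX_s \]
is, being the image of the Choi--Lam form under a permutation of variables, again a PSD-not-SOS PSD-extremal circuit form, whose outer exponents are the $2\alpha$ with $\alpha$ running through the degree-$2$ exponents of $X_pX_q,\,X_qX_r,\,X_pX_r$ and $X_s^2$. Now fix $i$, put $j:=i+n+1\in\{2n+1,\ldots,k\}$ and write $m_j=X_aX_b$ with $0\leq a\leq b\leq n$; since the monomials divisible by $X_0$ or $X_1$ occupy exactly the positions $0,\ldots,2n$ in the chosen ordering, necessarily $a\geq 2$. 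I would then set
\[ f_i:=\begin{cases} CL_{a,b,0,1}, & a<b,\\[1mm] CL_{0,1,2,a}, & a=b\geq 3,\\[1mm] CL_{0,1,3,2}, & a=b=2, \end{cases} \]
where the last case uses the variable $X_3$, available because $n\geq 4$.

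It then remains to verify $j(f_i)=j$, which is bookkeeping with the lexicographic order: the position of a degree-$2$ monomial in the list $m_0,\ldots,m_k$ is governed first by the smallest variable occurring in it, so a monomial supported on $\{X_0,X_1\}$ (resp.\ on $\{X_0,X_1,X_2\}$) has strictly smaller index than every $X_aX_b$ or $X_a^2$ with $a\geq 2$ (resp.\ $a\geq 3$). Concretely: for $a<b$ the outer-exponent square roots of $f_i$ are $X_aX_b,\,X_0X_a,\,X_0X_b,\,X_1^2$ with indices $j,a,b,n+1$, and $j\geq 2n+1>n+1>b>a$ pins down $j(f_i)=j$; for $a=b\geq 3$ they are $X_a^2,\,X_0X_1,\,X_0X_2,\,X_1X_2$ with indices $j,1,2,n+2$, and $j$ (the index of $X_a^2$) satisfies $j\geq 3n>n+2$, so $j(f_i)=j$; for $a=b=2$ they are $X_2^2,\,X_0X_1,\,X_0X_3,\,X_1X_3$ with indices $j=2n+1,1,3,n+3$, and $2n+1>n+3$ (as $n\geq 4$) gives $j(f_i)=j$. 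Theorem~\ref{cor:greatestcone} then finishes the argument.

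The only part that is not completely routine is exactly this index bookkeeping: one must make sure, uniformly over the boundary cases $a=2$ and $b=n$, that the three auxiliary outer exponents genuinely land at strictly smaller indices than the distinguished one — which is precisely why the auxiliary variables are taken from $\{X_0,X_1,X_2\}$ and why one needs $a\geq 2$ throughout the range $j\geq 2n+1$. The other ingredient that must be recorded (or cited precisely) is the PSD-extremality of the Choi--Lam quartic, since this is what lets Theorem~\ref{cor:greatestcone} be invoked.
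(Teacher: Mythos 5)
Your proposal is correct and takes essentially the same route as the paper: in both arguments one exhibits, for each $i$, a suitably permuted and embedded Choi--Lam quaternary quartic, observes that it remains a PSD-extremal, non-SOS circuit form with $j(f)=n+i+1$, and then applies Theorem~\ref{cor:greatestcone} to get $i(f)=i$. The only differences are cosmetic (your case split on $m_{n+i+1}=X_aX_b$ versus the paper's split on $m_{n+i}$, and the particular permutations chosen), and your index bookkeeping checks out.
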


\begin{thm} \cite[Theorem D]{GHK} \label{exmp:narysextics}
    For $(n+1,6)_{n\geq 3}$ and $i=n,\ldots,k-n-1$, the strict inclusion $C_{i} \subsetneq C_{i+1}$ holds.
\end{thm}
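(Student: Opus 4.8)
Since the $C_i$ form an increasing chain, $C_i\subsetneq C_{i+1}$ holds as soon as some $f$ satisfies $i(f)=i$, i.e.\ $f\in C_{i+1}\setminus C_i$. By Theorem~\ref{cor:greatestcone}, every PSD-extremal $f\in\Delta^{\mathfrak{C}}_{n+1,6}$ has $i(f)=j(f)-n-1$. Writing $j^\ast:=i+n+1$, it therefore suffices to produce, for each integer $j^\ast$ with $2n+1\le j^\ast\le k$, a PSD-extremal circuit sextic $f\in\Delta^{\mathfrak{C}}_{n+1,6}$ with $j(f)=j^\ast$: as $j^\ast$ runs over $\{2n+1,\dots,k\}$ the corresponding forms witness $C_i\subsetneq C_{i+1}$ for all $i\in\{n,\dots,k-n-1\}$, in particular $i=n$ (which is the last assertion of the theorem, and best possible by \cite[Theorem~A]{GHK}). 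Recalling that $j(f)$ equals the lexicographic index of the lexicographically \emph{smallest} outer half-exponent of $f$, the admissible targets $j^\ast$ correspond, via the explicit lexicographic ordering of the degree-$3$ monomials, to the exponents $\alpha\in\N_0^{n+1}$ with $|\alpha|=3$ and $\alpha\notin\{X_0^3\}\cup\{X_0^2X_l:l\ge1\}\cup\{X_0X_1X_l:l\ge1\}$. So the task is: for each such $\alpha$, build a PSD-extremal circuit sextic whose lexicographically smallest outer half-exponent is exactly $\alpha$.

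\textbf{Construction.} For a fixed admissible $\alpha$ I would take $f=\sum_{l=0}^r c_lX^{2\beta_l}+c_bX^{b}$ to be a nonnegative circuit form with $\beta_0=\alpha$ and each other $\beta_l$ lexicographically larger than $\alpha$ (so $\alpha$ is the lex-smallest outer half-exponent, whence $j(f)=j^\ast$), with $2\beta_0,\dots,2\beta_r$ affinely independent and $2\alpha$ a vertex of their convex hull, with $b$ an interior integer point of that simplex, all $c_l>0$, and $|c_b|$ equal to the circuit number $\prod_l(c_l/\lambda_l)^{\lambda_l}$, so that $f$ lies on the boundary of the nonnegativity region. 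The models are the Motzkin form $X_a^4X_b^2+X_a^2X_b^4+X_c^6-3X_a^2X_b^2X_c^2$ and the Choi--Lam form $X_a^4X_b^2+X_b^4X_c^2+X_c^4X_a^2-3X_a^2X_b^2X_c^2$; a suitable assignment of the roles of $X_a,X_b,X_c$ disposes of the targets $\alpha$ of one- or two-variable support, while for the $\alpha$ with three-variable support one adjoins one or two further outer terms — drawn from the lexicographic up-set of $\alpha$ — so as to make $2\alpha$ a vertex of the enlarged simplex while still admitting an interior integer point.

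\textbf{Verification and conclusion.} For each constructed $f$: it is a circuit form by routine bookkeeping of its Newton polytope; it is nonnegative by the arithmetic--geometric-mean inequality, equivalently because $|c_b|$ does not exceed the circuit number; and it is not a sum of squares and is PSD-extremal because $c_b$ lies on the boundary of a genuine (at least three-term) circuit — the content of the classical theory of nonnegative circuit forms, cf.\ \cite{Rez2}, with the Motzkin and Choi--Lam cases being standard and PSD-extremality behaving well under the relevant operations by \cite[Theorem~5.1]{CKLR}. Theorem~\ref{cor:greatestcone} then yields $i(f)=j(f)-n-1=j^\ast-n-1$, and the reduction step finishes the proof.

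\textbf{Main obstacle.} The heart of the argument is the combinatorics of the construction: realizing \emph{every} index in $\{2n+1,\dots,k\}$. For $\alpha$ of one- or two-variable support this is straightforward, but for the generic three-variable exponents $X_aX_bX_c$ the point $2\alpha$ need not be a vertex of the obvious simplices — e.g.\ $X_1^2X_2^2X_3^2$ is the barycenter of $X_1^6,X_2^6,X_3^6$ — so one must choose the auxiliary outer exponents carefully: lexicographically above $\alpha$ (to keep $\alpha$ lex-smallest), making $2\alpha$ a vertex, and still leaving an integer point with strictly positive barycentric coordinates in the relative interior of the simplex, so that the circuit-form definition applies and a non-binomial circuit results. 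Doing this uniformly in $\alpha$, and checking in each case that the resulting boundary circuit form fails to be a sum of squares, is where essentially all the effort goes; the reduction and the Newton-polytope checks are comparatively short.
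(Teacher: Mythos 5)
Your reduction is exactly the paper's: by Theorem~\ref{cor:greatestcone} it suffices to exhibit, for each admissible index $j^\ast=i+n+1\in\{2n+1,\dots,k\}$, a PSD-extremal circuit form $f\in\Delta^{\mathfrak{C}}_{n+1,6}$ with $j(f)=j^\ast$, and your identification of the admissible target exponents is correct. The gap is in the construction and, more seriously, in the verification principle you invoke. The claim that a nonnegative circuit form whose inner coefficient sits on the boundary (equals the circuit number) of a ``genuine (at least three-term) circuit'' is automatically not SOS and PSD-extremal is false on both counts: $X^6+Y^6+Z^6-3X^2Y^2Z^2$ is a boundary circuit form with three outer terms that \emph{is} a sum of squares (it factors as $\tfrac12(X^2+Y^2+Z^2)\bigl((X^2-Y^2)^2+(Y^2-Z^2)^2+(Z^2-X^2)^2\bigr)$), and the paper's own Example~\ref{exmp:notPSDextremalform} exhibits a boundary circuit sextic $\mathfrak{D}$ that is PSD-not-SOS yet \emph{not} PSD-extremal. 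Whether a boundary circuit form is SOS depends on a mediated-set condition on the inner exponent, and extremality is a further nontrivial condition (cf.\ \cite{Rez2}); neither comes for free. Since your whole argument channels through Theorem~\ref{cor:greatestcone}, which requires PSD-extremality, this is not a cosmetic issue. On top of that, the case you yourself flag as the crux --- exponents $\alpha$ of three-variable support, where $2\alpha$ fails to be a vertex of the obvious simplices --- is left as a programme rather than carried out.

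The paper sidesteps both problems by never constructing ``new'' circuits: every separating form is an explicit interpretation of one of three classical forms whose PSD-extremality and non-SOS-ness are already known from \cite[Cor.~3.3, Thms.~3.4, 3.5]{CL2}, namely the Motzkin form, the Choi--Lam ternary sextic, and the Choi--Lam quaternary quartic, the latter two also multiplied by monomial squares $X_j^2$ (extremality being preserved under multiplication by $X_j^2$ by \cite[Theorem~5.1(iii)]{CKLR}). Concretely, the indices $j^\ast\le k(n,2)$ (half-exponent divisible by $X_0$) are covered by $X_0^2g$ for $g$ in the quartic complete set $I_{n,2}$, and the remaining indices split into five cases according to the shape of $m_{n+i}$, each matched to a permuted Motzkin, Choi--Lam sextic, or $X_j^2\cdot\mathfrak{C}$; in particular your problematic three-variable exponents $X_jX_lX_r$ are handled by $X_j^2\,\mathfrak{C}(X_0,X_j,X_l,X_{r+1})$, not by enlarging a circuit. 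To repair your proof you would need either to reduce to such known extremal forms, or to verify the mediated-set (non-SOS) and extremality conditions case by case for your constructed circuits --- which is precisely the work your sketch defers.
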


In the proofs of the above two theorems in \cite{GHK}, in each case, we construct a set $I_{n,d}\subseteq\Delta_{n+1,2d}$ that we verify to be a complete set of separating forms by establishing the separating property of each $f\in I_{n,d}$ via several point evaluations. We now reconsider the sets $I_{n,d}$ and show that each $f\in I_{n,d}$ is a PSD-extremal circuit form by using Theorem \ref{cor:greatestcone}. This gives an alternative argument for $I_{n,d}$ to be a complete set of separating forms.

\begin{proof}[Proof of Theorem \ref{exmp:naryquartics}]
    We distinguish three cases:
	\begin{enumerate}
		\item $m_{n+i}(X)=X_jX_n$ for some $1\leq j \leq n-2$
		\item $m_{n+i}(X)=X_jX_l$ for some $2\leq j\leq l\leq n-1$
		\item $m_{n+i}(X)=m_{k-1}(X)$ 
	\end{enumerate}
	In each case, we propose the following separating forms in $C_{i+1}\backslash C_i$:
	\begin{enumerate}
		\item $\mathfrak{c}^\sigma(X):= X_0^2X_n^2+X_0^2X_{j}^2+X_{j}^2X_n^2+X_{j+1}^4-4X_0X_{j}X_{j+1}X_n$ 
		\item $\mathfrak{c}^\tau(X):=X_1^2X_{l+1}^2+X_j^2X_{l+1}^2+X_1^2X_j^2+X_0^4-4X_0X_1X_jX_{l+1}$
		\item $\mathfrak{c}(X):=X_0^2X_1^2+X_0^2X_{n-1}^2+X_1^2X_{n-1}^2+X_n^4-4X_0X_1X_{n-1}X_n$
	\end{enumerate}
	The $(n+1)$-ary quartics $\mathfrak{c}$, $\mathfrak{c}^\sigma$ and $\mathfrak{c}^\tau$ are interpretations of the Choi-Lam quaternary quartic \cite{CL2,CL} $$\mathfrak{C}(Y_0,Y_1,Y_2,Y_3):=Y_0^2Y_1^2+Y_0^2Y_2^2+Y_1^2Y_2^2+Y_3^4-4Y_0Y_1Y_2Y_3$$ which is a PSD-extremal (\cite[Theorem 3.5]{CL2}) circuit form that is not SOS. Thus, so are $\mathfrak{c}$, $\mathfrak{c}^\sigma$, and $\mathfrak{c}^\tau$. Moreover, in each case, $j(f)=n+i+1$ for $f\in\{\mathfrak{c}^\sigma,\mathfrak{c}^\tau,\mathfrak{c}\}$. Hence, by Theorem \ref{cor:greatestcone}, $i=i(f)$ in each case for $f\in\{\mathfrak{c}^\sigma,\mathfrak{c}^\tau,\mathfrak{c}\}$. Thus, these distinguished circuit quartics separate the corresponding cones $C_i$ and $C_{i+1}$ in each case. In particular, we constructed a complete set $I_{n,2}\subseteq\Delta_{n+1,4}$ of separating forms such that $\vert I_{n,2} \vert = \mu(n,2)+1$ and any $f\in I_{n,2}$ is a PSD-extremal circuit form.
\end{proof}

\begin{proof}[Proof of Theorem \ref{exmp:narysextics}]
    We denote the complete set of separating forms in $\Delta_{n+1,4}$ from the above proof of Theorem \ref{exmp:naryquartics} by $I_{n,2}$. In particular, we know that $\vert I_{n,2} \vert = \mu(n,2)+1$ and any $g\in I_{n,2}$ is a PSD-extremal circuit form. Hence, for any $g\in I_{n,2}$, also $X_0^2g(X)\in\Delta_{n+1,6}$ is a PSD-extremal circuit form. For the verification of $C_i\subsetneq C_{i+1}$ for $i=k(n,2)-n,\ldots,k(n,3)-n-1$, we distinguish five cases:
	\begin{enumerate}
		\item $m_{n+i}(X)=m_{k(n,2)}(X)$
		\item $m_{n+i}(X)=X_j^2X_l$ for some $1\leq j\leq l\leq n-1$
		\item $m_{n+i}(X)=X_jX_lX_n$ for some $1\leq j\leq l\leq n-1$
		\item $m_{n+i}(X)=X_jX_n^2$ for some $1\leq j\leq n-1$
		\item $m_{n+i}(X)=X_jX_lX_r$ for some $1\leq j< l \leq r \leq n-1$
	\end{enumerate}
	In each case, we propose the following separating forms in $C_{i+1}\backslash C_i$:
	\begin{enumerate}
		\item $\mathfrak{m}^\sigma(X):= X_0^4X_n^2+X_0^2X_n^4+X_1^6-3X_0^2X_1^2X_n^2$
		\item $\mathfrak{l}(X):=X_0^4X_j^2+X_0^2X_{l+1}^4+X_j^4X_{l+1}^2-3X_0^2X_j^2X_{l+1}^2$
		\item $\mathfrak{l}^\tau(X):=X_0^4X_{l+1}^2+X_0^2X_j^4+X_{l+1}^4X_j^2-3X_0^2X_j^2X_{l+1}^2$
		\item $\mathfrak{m}(X):=X_0^4X_j^2+X_0^2X_j^4+X_{j+1}^6-3X_0^2X_j^2X_{j+1}^2$
		\item $\mathfrak{c}^\rho(X):=X_j^4X_{r+1}^2+X_j^2X_l^2X_{r+1}^2+X_j^4X_l^2+X_0^4X_j^2-4X_0X_j^3X_lX_{r+1}$
	\end{enumerate}
	The $(n+1)$-ary sextics $\mathfrak{m}$ and $\mathfrak{m}^\sigma$ are interpretations of the Motzkin form $\mathfrak{M}$ \cite{Motzkin}, while $\mathfrak{l}$ and $\mathfrak{l}^\tau$ are interpretations of the Choi-Lam ternary sextic $\mathfrak{L}$ \cite{CL2, CL}.
    Likewise, $\mathfrak{c}^\rho$ comes from a multiplication of the Choi-Lam quaternary quartic $\mathfrak{C}$ (see proof of Theorem \ref{exmp:naryquartics}) with the monomial square $X_j^2$. $\mathfrak{M}$, $\mathfrak{L}$ and $\mathfrak{C}$ are PSD-extremal (\cite[Corollary 3.3, Theorem 3.4 and Theorem 3.5]{CL2}) circuit forms that are not SOS. Thus, so are $\mathfrak{m}$, $\mathfrak{m}^\sigma$, $\mathfrak{l}$, $\mathfrak{l}^\tau$ and $\mathfrak{c}^\rho$. Moreover, in each case, $j(f)=n+i+1$ for $f\in\{\mathfrak{m}^\sigma,\mathfrak{l},\mathfrak{l}^\tau,\mathfrak{m},\mathfrak{c}^\rho\}$. Hence, $i=i(f)$ in each case for $f\in\{\mathfrak{m}^\sigma,\mathfrak{l},\mathfrak{l}^\tau,\mathfrak{m},\mathfrak{c}^\rho\}$ by Theorem \ref{cor:greatestcone}. Thus, these distinguished circuit sextics separate the corresponding cones $C_i$ and $C_{i+1}$ in each case. In other words, recalling \cite[Theorem B]{GHK}, the union $I_{n,3}$ of the collection of the above distinguished circuit sextics and $\{X_0^2g(X) \mid g\in I_{n,2}\}$ is a complete set of separating forms in $\Delta_{n+1,6}$ such that $\vert I_{n,3} \vert = \mu(n,3)+1$ and any $f\in I_{n,3}$ is a PSD-extremal circuit form. 
\end{proof}

\begin{rem} \label{rem:sepset} In the proof of \cite[Theorem 4.2]{GHK}, for $d=3$, $n=2$, we find a set $J_{2,3}\subseteq \Delta_{3,6}^{\mathfrak{C}}$ of PSD-extremal forms such that $$\{i(f) \mid f\in J_{2,3}\}=\{k(2,2)-2,\ldots,k(2,3)-2-1\}.$$ Likewise, in the proof of \cite[Theorem C]{GHK} given above, for $d=3$, $n\geq 3$, we especially find a set $J_{n,3}\subseteq \Delta_{n+1,6}^{\mathfrak{C}}$ of PSD-extremal forms such that $$\{i(f) \mid f\in J_{n,3}\}=\{k(n,2)-n,\ldots,k(n,3)-n-1\}.$$
\end{rem} 
For $n\geq 2$ and any $d\geq 4$, we now establish a Degree-Jumping Principle in Proposition \ref{thm:degreejump} that allows us to generate 
a set $J_{n,d}\subseteq\Delta_{n+1,2d}^{\mathfrak{C}}$ of PSD-extremal forms from $J_{n,3}$ such that $$\{i(f) \mid f\in J_{n,d}\} = \{k(n,d-1)-n,\ldots,k(n,d)-n-1\}.$$ Lastly, we extend $J_{n,d}$ to a complete sets of separating forms $I_{n,d}\subseteq\Delta_{n+1,2d}$ in the proof of Theorem \ref{thm:allstrict} below.

\begin{prop}[Degree-Jumping Principle] \label{thm:degreejump} $\;$ \\
Let $n\geq 2$ and assume that there exists a set $J_{n,3}\subseteq\Delta_{n+1,6}^{\mathfrak{C}}$ of PSD-extremal forms such that 
$$\{i(f) \mid f\in J_{n,3}\}=\{k(n,2)-n,\ldots,k(n,3)-n-1\}.$$ Then, for $d\geq 4$, there exists a set $J_{n,d}\subseteq\Delta_{n+1,2d}^{\mathfrak{C}}$ of PSD-extremal forms such that $$\{i(f) \mid f\in J_{n,d}\}=\{k(n,d-1)-n,\ldots,k(n,d)-n-1\}.$$
\end{prop}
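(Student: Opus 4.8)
The plan is to pass from degree $3$ to degree $d$ by repeatedly multiplying circuit forms by the monomial square $X_0^2$, controlling the invariant $j(\cdot)$ at each step via Theorem~\ref{cor:greatestcone}. The starting point is the following observation: if $g\in\Delta_{n+1,2(d-1)}^{\mathfrak C}$ is PSD-extremal, then $X_0^2g\in\mathcal F_{n+1,2d}$ is again a circuit form (its Newton polytope and outer exponents are those of $g$ translated by $(2,0,\dots,0)$, so conditions (1)--(3) in the definition of circuit form are preserved), it is again PSD but not SOS (a monomial square times a PSD-not-SOS form stays PSD-not-SOS, since an SOS factorization of $X_0^2g$ would force $X_0^2\mid$ each square, contradicting that $g\notin\Sigma$), and it is PSD-extremal by the cited \cite[Theorem 5.1 (iii)]{CKLR}. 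Thus the map $g\mapsto X_0^2g$ sends $\Delta_{n+1,2(d-1)}^{\mathfrak C}\cap\{\text{PSD-extremal}\}$ into $\Delta_{n+1,2d}^{\mathfrak C}\cap\{\text{PSD-extremal}\}$, and we may iterate it. The first step I would carry out is therefore to state and verify precisely this closure property, inducting on $d$ so that it suffices to treat the single jump from $d-1$ to $d$.

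The second, and main, step is to pin down how $j(f)$ changes under $f\mapsto X_0^2 f$. If $g$ has outer exponents $2\alpha_{j_0},\dots,2\alpha_{j_r}$ with respect to the degree-$(d-1)$ monomial basis, then $X_0^2g$ has outer exponents $2(\alpha_{j_0}+e_0),\dots,2(\alpha_{j_r}+e_0)$ with respect to the degree-$d$ basis, where $e_0=(1,0,\dots,0)$. Because the lexicographic order on exponents of degree $d$ is compatible with prepending a fixed increment $e_0$ to exponents of degree $d-1$ in the precise sense that $\alpha\mapsto\alpha+e_0$ is an order-preserving injection from $\{|\alpha|=d-1\}$ into $\{|\beta|=d,\ \beta_0\ge 1\}$ which is in fact a bijection onto the latter set, the index $j(X_0^2 g)$ is exactly the position, in the degree-$d$ ordering, of $2(\alpha_{j(g)}+e_0)$. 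I would make this explicit by introducing the bijection $\{|\alpha|=d-1\}\to\{|\beta|=d:\beta_0\ge1\}$, checking it is order-preserving, and reading off a clean formula $j(X_0^2 g)=\psi_{d-1}(j(g))$ for an explicit increasing function $\psi_{d-1}$; combined with $i(f)=j(f)-n-1$ from Theorem~\ref{cor:greatestcone}, this gives $i(X_0^2 g)=\psi_{d-1}(i(g)+n+1)-n-1$. The content of the proposition is then the bookkeeping claim that, as $g$ ranges over $J_{n,d-1}$ with $\{i(g)\}=\{k(n,d-2)-n,\dots,k(n,d-1)-n-1\}$, the values $i(X_0^2 g)$ together with the (finitely many) remaining values in $\{k(n,d-1)-n,\dots,k(n,d)-n-1\}$ can be covered; the forms $X_0^2 g$ will cover the lower part of that target interval, and for the upper part --- the genuinely new exponents $m_{n+i}(X)$ that are not divisible by $X_0^2$, i.e.\ those of the form $X_0\cdot(\text{monomial in }X_1,\dots,X_n)$ or purely in $X_1,\dots,X_n$ --- one produces fresh circuit forms by the same interpretation-of-Motzkin/Choi--Lam recipe used in the proofs of Theorems~\ref{exmp:naryquartics} and~\ref{exmp:narysextics}, each engineered so that $j(f)=n+i+1$.

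The delicate point --- the place I expect to spend the most effort --- is the upper part of the target interval, not the iteration itself: one must exhibit, for \emph{every} index $i$ with $n+i$ corresponding to a degree-$d$ monomial not divisible by $X_0^2$, an explicit PSD-extremal circuit form $f$ with $j(f)=n+i+1$, and verify its circuit-form and PSD-extremality properties. This is exactly the kind of case analysis on the shape of $m_{n+i}(X)$ that appears in the two preceding proofs (splitting according to which variables occur and with what multiplicities, and choosing the Motzkin-type form $X^{2a}X_l^{2d}+X^{2a}X_r^{2d-\text{stuff}}+\dots-3X^{\dots}$ or the Choi--Lam-type form accordingly), and the main obstacle is checking in each such case that the proposed form genuinely has its largest outer exponent at position $n+i+1$ so that Theorem~\ref{cor:greatestcone} applies and yields $i(f)=i$. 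Once all indices in $\{k(n,d-1)-n,\dots,k(n,d)-n-1\}$ are realized --- the lower ones by $\{X_0^2 g: g\in J_{n,d-1}\}$ via the formula for $\psi_{d-1}$, the upper ones by the ad hoc constructions --- we set $J_{n,d}$ to be the union of these two families, and by construction $\{i(f)\mid f\in J_{n,d}\}=\{k(n,d-1)-n,\dots,k(n,d)-n-1\}$ with every member a PSD-extremal circuit form, completing the induction.
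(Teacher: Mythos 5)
There is a genuine gap, and it lies exactly where you place the weight of the argument. Your iteration step multiplies by $X_0^2$, and you claim the forms $X_0^2g$ "cover the lower part of the target interval". If you compute your map $\psi_{d-1}$ explicitly you will find it is the identity: the order-preserving bijection $\alpha\mapsto\alpha+e_0$ sends $\{\vert\alpha\vert=d-1\}$ onto the \emph{initial segment} $\{\vert\beta\vert=d,\ \beta_0\geq 1\}$ of the degree-$d$ lexicographic ordering, so the position of $\alpha_{j(g)}^{(d-1)}+e_0$ in the degree-$d$ ordering equals $j(g)$, i.e.\ $j(X_0^2g)=j(g)$ and hence $i(X_0^2g)=i(g)$ (this is precisely \cite[Theorem B]{GHK}). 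Since $i(g)$ ranges over $\{k(n,d-2)-n,\ldots,k(n,d-1)-n-1\}$, which is disjoint from the target $\{k(n,d-1)-n,\ldots,k(n,d)-n-1\}$, the forms $X_0^2g$ cover \emph{none} of the target interval: the entire target corresponds to $j(f)=n+i+1\in\{k(n,d-1)+1,\ldots,k(n,d)\}$, i.e.\ to monomials \emph{not} divisible by $X_0$, which $X_0^2g$ can never reach. Your plan therefore collapses to constructing every index in the new range by a fresh Motzkin/Choi--Lam case analysis, which you do not carry out and which you yourself identify as the hard part.

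The missing idea is to choose the multiplier adaptively. For a target index $i$ with $m_{n+i+1}^{(d)}$ a monomial in $X_1,\ldots,X_n$, let $l\geq 1$ be maximal with $X_l^{d}\geq_{lex}m_{n+i+1}^{(d)}$ (equivalently, $X_l$ is the smallest-index variable occurring in $m_{n+i+1}^{(d)}$). Then $m_{n+i+1}^{(d)}=X_l\,m'$ where $m'$ is a degree-$(d-1)$ monomial in $X_l,\ldots,X_n$, hence not divisible by $X_0$, so $m'=m_{n+j+1}^{(d-1)}$ for some $j$ in the range covered by the inductive hypothesis. Taking $g\in J_{n,d-1}$ with $i(g)=j$ and $f:=X_l^2g$, the translation-invariance of the lex order gives $j(f)=n+i+1$, and Theorem \ref{cor:greatestcone} yields $i(f)=i$. (Your closure observations --- that $X_l^2g$ remains a PSD-extremal, PSD-not-SOS circuit form --- are correct and carry over verbatim to $l\geq 1$.) With this choice the whole target range is realized by one uniform construction and no ad hoc forms are needed; this is the route the paper takes.
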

    \begin{proof}
         We verify the assertion by an induction on $d\geq 3$. The inductive base case $d=3$ is solved by assumption. Thus, we assume that the assertion was already verified up to some $d\geq 3$ and investigate the situation for $d+1$. For the purpose of this proof, we denote the lexicographically ordered monomial basis of $\mathcal{F}_{n+1,\delta}$ by 
         $\{m_0^{(\delta)},\ldots,m_{k(n,\delta)}^{(\delta)}\}$ and the corresponding exponents by $\alpha_0^{(\delta)},\ldots,\alpha_{k(n,\delta)}^{(\delta)}$ for $\delta\in\{d,d+1\}$. 
         
         For $i=k(n,d)-n,\ldots,k(n,d+1)-n-1$, we set $$l:=\max\{l\in\{1,\ldots,n\}\mid X_l^{d+1}\geq_{lex} m_{n+i+1}^{(d+1)}(X)\}$$ and fix $j\in\{k(n,d)-k(n-l,d)-n-1,\ldots,k(n,d)-n-1\}$ such that $X_{l}m_{n+j+1}^{(d)}(X)=m_{n+i+1}^{(d+1)}(X)$. The inductive assumption yields the existence of a PSD-extremal form $g\in\Delta_{n+1,2d}^\mathfrak{C}$ such that $i(g)=j$. For the
         PSD-extremal form $f(X):=X_l^2g(X)\in\Delta_{n+1,2(d+1)}^\mathfrak{C}$, we compute by Theorem \ref{cor:greatestcone} that $i(f)=j(f)-n-1=j(X_l^2g(X))-n-1=i$.
    \end{proof}

\begin{cor} \label{cor:degreejump} For $n\geq 2$, $d\geq 3$, there exists a set $J_{n,d}\subseteq\Delta_{n+1,2d}^{\mathfrak{C}}$ of PSD-extremal forms such that $$\{i(f) \mid f\in J_{n,d}\}=\{k(n,d-1)-n,\ldots,k(n,d)-n-1\}.$$
\end{cor}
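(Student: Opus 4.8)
The plan is to obtain Corollary~\ref{cor:degreejump} by combining the Degree-Jumping Principle (Proposition~\ref{thm:degreejump}) with the explicit base cases already recorded in Remark~\ref{rem:sepset}, performing an induction on $d$ whose heavy lifting has been done upstream.

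First I would dispose of the base case $d=3$. For $n=2$, Remark~\ref{rem:sepset} (extracted from the proof of \cite[Theorem~4.2]{GHK}) supplies a set $J_{2,3}\subseteq\Delta_{3,6}^{\mathfrak{C}}$ of PSD-extremal forms with $\{i(f)\mid f\in J_{2,3}\}=\{k(2,2)-2,\ldots,k(2,3)-2-1\}$; for $n\geq 3$, the same Remark (extracted from the proof of Theorem~\ref{exmp:naryquartics} given above) supplies a set $J_{n,3}\subseteq\Delta_{n+1,6}^{\mathfrak{C}}$ of PSD-extremal forms with $\{i(f)\mid f\in J_{n,3}\}=\{k(n,2)-n,\ldots,k(n,3)-n-1\}$. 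Either way, a set $J_{n,3}$ with the property asserted in the corollary exists for every $n\geq 2$, so the statement holds for $d=3$.

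Second, for $d\geq 4$ I would feed these base cases into Proposition~\ref{thm:degreejump}: its hypothesis is exactly the existence of such a $J_{n,3}$, so the Degree-Jumping Principle directly produces, for each $d\geq 4$, a set $J_{n,d}\subseteq\Delta_{n+1,2d}^{\mathfrak{C}}$ of PSD-extremal forms with $\{i(f)\mid f\in J_{n,d}\}=\{k(n,d-1)-n,\ldots,k(n,d)-n-1\}$. Concretely, $J_{n,d}$ is the set obtained from $J_{n,3}$ by the iterated square-multiplication construction in that proof, each step raising the degree by two and shifting $j(f)$ (hence $i(f)$, via Theorem~\ref{cor:greatestcone}) in a controlled way.

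I do not expect a genuine obstacle here: the induction on $d$ that moves from degree $2d$ to $2(d+1)$ by multiplying a PSD-extremal circuit form by $X_l^2$ and tracking the resulting jump in $j(f)$ is precisely Proposition~\ref{thm:degreejump}, and the verification that the separating forms of \cite{GHK} (the interpretations of the Motzkin form, the Choi--Lam ternary sextic, and the Choi--Lam quaternary quartic, together with their monomial-square multiples) are PSD-extremal circuit forms with $j(f)=n+i+1$ is what Remark~\ref{rem:sepset} records. The only care needed is matching the index ranges $\{k(n,d-1)-n,\ldots,k(n,d)-n-1\}$ across the base case $d=3$ and the inductive step, which is immediate from the way the ranges telescope.
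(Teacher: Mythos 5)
Your proposal is correct and is exactly the paper's argument: the paper's proof of Corollary \ref{cor:degreejump} is the one-line combination of Remark \ref{rem:sepset} (supplying the base sets $J_{2,3}$ and $J_{n,3}$ for $n\geq 3$) with the Degree-Jumping Principle, Proposition \ref{thm:degreejump}. The only nitpick is attributional: the sextic base sets for $n\geq 3$ come from the proof of Theorem \ref{exmp:narysextics} (the $(n+1,6)$ case), not of Theorem \ref{exmp:naryquartics}, but this does not affect the argument.
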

    \begin{proof} The assertion follows from Remark \ref{rem:sepset} and Proposition \ref{thm:degreejump}.
    \end{proof}

    \begin{proof}[\textbf{Proof of Theorem \ref{thm:allstrict}}] We fix $n\geq 2$ and argue inductively over $d\geq 3$. If $d=3$, then the assertion is true by \cite[Theorem 4.2 and Theorem D]{GHK}. We therefore assume that the statement was already verified up to some $d\geq 3$. We now investigate the situation for $d+1$. 
    
    By the inductive assumption, each inclusion in $C_{n+1}\subseteq \ldots \subseteq C_{k(n,d)-n}$ as subcones of $\mathcal{P}_{n+1,2d}$ is strict. Thus, each inclusion in $C_{n+1}\subseteq \ldots \subseteq C_{k(n,d)-n}$ as subcones of $\mathcal{P}_{n+1,2d+2}$ is strict by \cite[Theorem B]{GHK}. The same is true for the inclusion $C_n\subseteq C_{n+1}$ when $n\geq 3$. Moreover, Corollary \ref{cor:degreejump} implies that each inclusion in $C_{k(n,d)-n}\subseteq \ldots \subseteq C_{k(n,d+1)-n}$ is strict.
    \end{proof}

The proof of Theorem \ref{thm:allstrict} provides a method to explicitly construct,  complete sets $I_{n,d}\subseteq\Delta_{n+1,2d}^{\mathfrak{C}}$ of separating forms such that any $f\in I_{n,d}$ is PSD-extremal and $\vert I_{n,d} \vert = \mu(n,d)+1$. In fact, even the following holds:

\begin{cor} \label{cor:specialf} For $n\geq 2$, $d\geq 3$ and $i=n+1,\ldots,k-n-1$, there exists (a PSD-extremal circuit form) $f_i\in (C_{i+1}\backslash C_i)\cap\mathfrak{S}_{n+i+1}$. Moreover, if $n\geq 3$, then there also exists (a PSD-extremal circuit form) $f_n\in (C_{n+1}\backslash C_{n})\cap\mathfrak{S}_{2n+1}$.
\end{cor}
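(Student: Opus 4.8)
The plan is to reduce the statement to the single task of finding, for each admissible $i$, one PSD-extremal circuit form $f_i\in\Delta^{\mathfrak{C}}_{n+1,2d}$ with $i(f_i)=i$; once this is done, everything else is forced. Indeed, by uniqueness of $i(f_i)$ such an $f_i$ automatically lies in $C_{i+1}\backslash C_i$. Since $f_i$ is a PSD-extremal circuit form, Theorem \ref{cor:greatestcone} gives $j(f_i)=i(f_i)+n+1=n+i+1$, and then Proposition \ref{prop:circuitsatisfysuff} yields $f_i\in\mathfrak{S}_{j(f_i)}=\mathfrak{S}_{n+i+1}$. Hence $f_i\in(C_{i+1}\backslash C_i)\cap\mathfrak{S}_{n+i+1}$, and the case $i=n$ for $n\geq 3$ is handled in exactly the same way. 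So the real content is just the interplay of Theorem \ref{cor:greatestcone} (which pins down $j(f_i)$) with Proposition \ref{prop:circuitsatisfysuff} (which forces membership in the span $\mathfrak{S}_{j(f_i)}$).

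It then remains to produce the forms $f_i$, and for this I would appeal to the construction carried out in the proof of Theorem \ref{thm:allstrict} and the discussion immediately following it: the complete sets of separating forms $I_{n,d}\subseteq\Delta^{\mathfrak{C}}_{n+1,2d}$ built there consist entirely of PSD-extremal circuit forms. By Theorem \ref{thm:allstrict} together with \cite[Theorem A]{GHK}, the inclusion $C_i\subsetneq C_{i+1}$ is strict for every $i\in\{n+1,\ldots,k-n-1\}$ (and for $i=n$ when $n\geq 3$), whereas $C_0=\ldots=C_n$ for $n\geq 3$ and $C_0=C_1=C_2=C_3$ for $n=2$. Consequently $\{i(f)\mid f\in I_{n,d}\}$ equals $\{n,\ldots,k-n-1\}$ when $n\geq 3$ and $\{3,\ldots,k-n-1\}$ when $n=2$, so for each $i$ in the stated range some $f_i\in I_{n,d}$ satisfies $i(f_i)=i$. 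A more self-contained route is to build the family directly by induction on $d$: realise the case $d=3$ using the explicit PSD-extremal circuit forms (interpretations of the Motzkin form and of the Choi--Lam ternary sextic, and the Choi--Lam quaternary quartic multiplied by $X_0^2$) appearing in the proofs of Theorems \ref{exmp:naryquartics}, \ref{exmp:narysextics} and in \cite[Theorem 4.2]{GHK}; adjoin the forms of Corollary \ref{cor:degreejump} to cover the block $\{k(n,d-1)-n,\ldots,k(n,d)-n-1\}$; and lift the remaining (lower) forms one degree at a time via $f\mapsto X_0^2 f$, using that multiplication by $X_0$ identifies the lexicographically ordered monomial basis of degree $d$ with the leading $k(n,d)+1$ monomials of degree $d+1$, so that $j(X_0^2 f)=j(f)$ and hence $i(X_0^2 f)=i(f)$.

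The only delicate point — and the closest thing to an obstacle — is to confirm that every form occurring in this construction is genuinely a PSD-extremal circuit form for all $d\geq 3$, not merely in the base cases. PSD-extremality is preserved under multiplication by $X_j^2$ by \cite[Theorem 5.1 (iii)]{CKLR} (recalled in the Preliminaries). For the circuit property one observes that if $g$ is a circuit form then so is $X_j^2 g$: passing from $g$ to $X_j^2 g$ translates the support by the exponent vector of $X_j^2$, which preserves the vertex set of the Newton polytope, the evenness of those vertices, their affine independence, and the existence and uniqueness of the barycentric coefficients $\lambda_0,\ldots,\lambda_r$ witnessing the inner exponent. Combined with the Degree-Jumping Principle (Proposition \ref{thm:degreejump}), which already encapsulates the inductive step for the upper block, this shows that the family $\{f_i\}$ consists of PSD-extremal circuit forms across all degrees, which completes the argument.
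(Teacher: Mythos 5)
Your argument is correct and follows the paper's intended route: the corollary is read off from the complete sets $I_{n,d}$ of PSD-extremal circuit separating forms built in the proof of Theorem \ref{thm:allstrict} (base case $d=3$ plus the Degree-Jumping/$X_0^2$-lifting step), combined with Theorem \ref{cor:greatestcone} to get $j(f_i)=n+i+1$ and Proposition \ref{prop:circuitsatisfysuff} to place $f_i$ in $\mathfrak{S}_{n+i+1}$. Your added checks (preservation of the circuit property and of PSD-extremality under multiplication by $X_j^2$, and $j(X_0^2f)=j(f)$ via the lex-order compatibility of $\alpha\mapsto\alpha+e_0$) are exactly the facts the paper uses implicitly.
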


We thus completely answered ($Q$) by providing a refinement of Hilbert's 1888 Theorem with the number of strictly separating cones in (\ref{ConeFiltration}) being
$$\mu(n,d):=\begin{cases} (k(n,d)-n)-(n+1)-1, & \mbox{for } (3,2d)_{d\geq 3} \\
	(k(n,d)-n)-(n+1), & \mbox{for } (n+1,2d)_{n\geq 3, d\geq 2}. \end{cases}$$
 The filtration in (\ref{ConeFiltration}), thus, reduces to a specific cone subfiltration \begin{equation} \label{part1:eq2} \Sigma_{n+1,2d}=C_0^\prime\subsetneq C_1^\prime \subsetneq \ldots \subsetneq C_{\mu(n,d)}^\prime \subsetneq C_{\mu(n,d)+1}^\prime=\mathcal{P}_{n+1,2d},
\end{equation} in which each inclusion is strict.
\section{(Non-)Spectrahedral Shadows} \label{properties}
In the Hilbert cases, $\Sigma_{n+1,2d}=C_0=\ldots=C_{k-n}=\mathcal{P}_{n+1,2d}$ and $\Sigma_{n+1,2d}$ is a spectrahedral shadow. Thus, all $C_i$'s are spectrahedral shadows. In any non-Hilbert case, a similar argument is valid only for the first $n+1$ (respectively $n+2$ if $n=2$) cones in (\ref{ConeFiltration}) as we show in Lemma \ref{lem:ss}. However, for the remaining $C_i$'s, the situation is the opposite, i.e., each strictly separating $C_i$ in (\ref{ConeFiltration}) fails to be a spectrahedral shadow. This is the last main result (Theorem \ref{thm:spectrahedralshadows}) of this paper which we prove by an induction over $d$ using Theorem \ref{thm:notss} and Theorem \ref{thm:notssjump}.

\begin{lem} \label{lem:ss} For $n,d\geq 2$, $(n+1,2d)\neq (3,4)$ and $i=0,\ldots,n$, $C_i$ is a spectrahedral shadow. Moreover, if $n=2$, then also $C_{n+1}$ is a spectrahedral shadow.
\end{lem}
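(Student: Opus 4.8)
The plan is to recall from \cite{GHK} (or from the structure of the Veronese filtration) that the first few cones in (\ref{ConeFiltration}) do not yet ``feel'' the obstruction to being SOS, so they coincide with suitable sums-of-squares-type cones that are manifestly spectrahedral shadows. Concretely, by \cite[Theorem A]{GHK} we already know $C_0=\ldots=C_n$ (and $C_0=\ldots=C_{n+1}$ when $n=2$), and $C_0=\Sigma_{n+1,2d}$ is a spectrahedral shadow by the Gram matrix method: it is the image under the (linear) Gram map $\mathcal{G}$ of the spectrahedron $\{A\in\mathrm{Sym}_{k+1}(\R)\mid A\succeq 0\}$, via (\ref{CharSOS2}). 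Since the image of a spectrahedron under a linear map is a spectrahedral shadow by definition, $C_0$ is a spectrahedral shadow, and hence so is every cone equal to it.

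First I would state precisely that $C_i=\Sigma_{n+1,2d}$ for $i=0,\ldots,n$ (and additionally $i=n+1$ if $n=2$), citing \cite[Theorem A]{GHK} and the reduction of question $(Q)$ to $(Q')$ recorded in the preliminaries. Then I would spell out the one-line argument that $\Sigma_{n+1,2d}$ is a spectrahedral shadow: fix the monomial basis $m_0,\ldots,m_k$ of $\mathcal{F}_{n+1,d}$; by (\ref{CharSOS2}) a form $f$ lies in $\Sigma_{n+1,2d}$ iff there is a PSD Gram matrix $A$ with $\mathcal{G}(A)=f$; the set $S:=\mathrm{Sym}_{k+1}^+(\R)$ is a spectrahedron (it is the preimage of itself under the identity affine map, or more pedantically $\{A\mid A\succeq 0\}$ is cut out by the linear matrix inequality $A\succeq 0$), and $\mathcal{G}\colon\mathrm{Sym}_{k+1}(\R)\to\mathcal{F}_{n+1,2d}$ is linear, so $\Sigma_{n+1,2d}=\mathcal{G}(S)$ is a spectrahedral shadow. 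Finally I would conclude: since $C_i=\Sigma_{n+1,2d}$ for all $i$ in the claimed range, each such $C_i$ is a spectrahedral shadow.

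The argument is essentially bookkeeping, so there is no serious obstacle; the only point requiring care is making sure the indexing matches the two regimes. For $n\geq 3$ the range is $i=0,\ldots,n$ and these all equal $\Sigma_{n+1,2d}$ by \cite[Theorem A]{GHK}; for $n=2$ the same reference gives $C_0=C_1=C_2=C_3$, which is exactly the ``moreover'' clause asserting $C_{n+1}=C_3$ is also a spectrahedral shadow. One should also note in passing that the hypothesis $(n+1,2d)\neq(3,4)$ and $n,d\geq 2$ only serves to place us in a non-Hilbert case where the filtration is genuinely nontrivial; the spectrahedral-shadow conclusion for these low-index cones is in fact insensitive to that, but keeping the hypothesis keeps the statement uniform with the rest of the section. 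I would therefore present the proof as: (i) identify the cones with $\Sigma_{n+1,2d}$ via \cite[Theorem A]{GHK}; (ii) exhibit $\Sigma_{n+1,2d}$ as $\mathcal{G}(\mathrm{Sym}_{k+1}^+(\R))$ using (\ref{CharSOS2}); (iii) invoke the definition of spectrahedral shadow to finish.
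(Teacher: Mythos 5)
Your proposal is correct and is essentially identical to the paper's proof, which simply cites \cite[Theorem A]{GHK} to identify $C_0=\ldots=C_n$ (and $C_{n+1}$ when $n=2$) with $\Sigma_{n+1,2d}$ and then notes that $\Sigma_{n+1,2d}$ is a spectrahedral shadow. Your additional spelling-out of why $\Sigma_{n+1,2d}=\mathcal{G}(\mathrm{Sym}_{k+1}^+(\R))$ is a spectrahedral shadow is a correct elaboration of a fact the paper treats as known.
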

    \begin{proof} By \cite[Theorem A]{GHK}, $C_i=\Sigma_{n+1,2d}$ which is a spectrahedral shadow.
    \end{proof}

\begin{thm} \label{thm:notss} The following are true:
    \begin{enumerate}
        \item For $n\geq 3$, $d=2$ and $i=n+1,\ldots,k-n$, $C_i$ is not a spectrahedral shadow.
        \item For $n\geq 2$, $d\geq 3$ and $i=k(n,d-1)-n+1,\ldots,k(n,d)-n$, $C_i$ is not a spectrahedral shadow.
    \end{enumerate}
\end{thm}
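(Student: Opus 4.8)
The plan is to combine Scheiderer's obstruction from \cite{Scheiderer} with two closure properties of the class of spectrahedral shadows -- closure under linear images and under intersection with linear subspaces -- and with the structural results $C_i=C_{\phi(K_i)}$ (Theorem \ref{cor:extendingCi}) and $C_i\cap\mathfrak{S}_{n+i}=\mathcal{P}_{n+1,2d}\cap\mathfrak{S}_{n+i}$ (Theorem \ref{thm:suffCi}). In each case I would reduce ``$C_i$ is not a spectrahedral shadow'' to ``a suitable linear section, or linear image, of $C_i$ is not a spectrahedral shadow'', the target being (a section of) the $\mathrm{PSD}$ cone of a non-Hilbert case.

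For part $(2)$ I would first dispose of all $(n,d)$ with $d\ge 3$ except $(n,d)=(2,3)$ by a degree-lowering section. Fix $i$ in the stated range, so that $n+i\ge k(n,d-1)+1$, and consider the injective linear map $\iota\colon\mathcal{F}_{n+1,2(d-1)}\to\mathcal{F}_{n+1,2d}$, $g\mapsto X_0^2g$, with image $L$. The claim is $C_i\cap L=\iota(\mathcal{P}_{n+1,2(d-1)})$: ``$\subseteq$'' is clear since $X_0^2g$ PSD forces $g$ PSD, and for ``$\supseteq$'', given $g\in\mathcal{P}_{n+1,2(d-1)}$ one takes a Gram matrix $B$ of $g$ nonnegative on the real degree-$(d-1)$ Veronese variety and zero-pads it to a Gram matrix $\tilde B$ of $X_0^2g$; using $X_0m^{(d-1)}_s=m^{(d)}_s$ for $s\le k(n,d-1)$, the form $q_{\tilde B}$ on $\phi(K_i)(\R)$ only reads the first $k(n,d-1)+1$ coordinates, which there are the coordinates of a degree-$(d-1)$ Veronese point, so $q_{\tilde B}\ge 0$ there and $X_0^2g\in C_{\phi(K_i)}=C_i$ by Theorem \ref{cor:extendingCi}. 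Since $(n+1,2(d-1))$ is non-Hilbert precisely when $(n,d)\ne(2,3)$, Scheiderer's theorem gives that $\mathcal{P}_{n+1,2(d-1)}$, hence $C_i\cap L$, hence $C_i$, is not a spectrahedral shadow.

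It then remains to treat the base cases: part $(1)$ (so $d=2$, $n\ge 3$, $i=n+1,\dots,k-n$) and the case $(n+1,2d)=(3,6)$ with $i\in\{4,5,6\}$; here the degree-lowering section lands in a Hilbert case, so a different input is needed. For $i=k-n$ one has $C_{k-n}=\mathcal{P}_{n+1,2d}$, and Scheiderer's theorem applies directly. For the remaining $i$ I would work inside $\mathfrak{S}_{n+i}$, where by Theorem \ref{thm:suffCi} the cone $C_i$ coincides with $\mathcal{P}_{n+1,2d}\cap\mathfrak{S}_{n+i}$, and I would run Scheiderer's argument there, based at the separating PSD-extremal circuit form $f_{i-1}$ produced in the proof of Theorem \ref{thm:allstrict} (an interpretation of the Choi--Lam quaternary quartic for $d=2$, of the Motzkin form or the Choi--Lam ternary sextic for $(3,6)$). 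By Corollary \ref{cor:specialf} we have $f_{i-1}\in(C_i\setminus C_{i-1})\cap\mathfrak{S}_{n+i}$, so this form is PSD-extremal, not SOS, and lies in the section; thus $\mathcal{P}_{n+1,2d}\cap\mathfrak{S}_{n+i}$ properly contains the corresponding SOS cone and carries, at $f_{i-1}$, exactly the local geometry Scheiderer's obstruction exploits. When $n+i$ is large enough that a coordinate copy of $\mathcal{F}_{4,4}$ (resp.\ $\mathcal{F}_{3,6}$) fits inside $\mathfrak{S}_{n+i}$, this section literally equals $\mathcal{P}_{4,4}$ (resp.\ $\mathcal{P}_{3,6}$) and one may simply cite \cite{Scheiderer}.

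The hard part is this last step for the intermediate indices -- those $i$ for which no coordinate copy of a basic $\mathrm{PSD}$ cone sits inside $\mathfrak{S}_{n+i}$ -- since then one may not quote Scheiderer's theorem for $\mathcal{P}_{4,4}$ or $\mathcal{P}_{3,6}$ but must verify that his contradiction, obtained from a hypothetical spectrahedral representation by a local analysis at a PSD-extremal non-SOS form, survives restriction of the ambient vector space to the proper subspace $\mathfrak{S}_{n+i}$. This means re-examining Scheiderer's construction, placing the base point of the local analysis at the circuit form $f_{i-1}$ (whose extremality and exact membership $f_{i-1}\in\mathfrak{S}_{n+i}$ come from Theorem \ref{cor:greatestcone} and Theorem \ref{thm:suffCi}), and checking that the relevant dimension counts are unaffected by passing from $\mathcal{F}_{n+1,2d}$ to $\mathfrak{S}_{n+i}$.
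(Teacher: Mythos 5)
Your degree-lowering section for part (2) away from $(n,d)=(2,3)$ is correct and is a genuinely different, more elementary route than the paper's: since $n+i\geq k(n,d-1)+1$, the first $k(n,d-1)+1$ degree-$d$ monomials are exactly $X_0$ times the degree-$(d-1)$ monomials in the same lexicographic order, so the zero-padding argument indeed gives $C_i\cap X_0^2\mathcal{F}_{n+1,2(d-1)}=X_0^2\,\mathcal{P}_{n+1,2(d-1)}$, and non-shadowness follows directly from Scheiderer's published result for $\mathcal{P}_{n+1,2(d-1)}$, with no valuation-theoretic work. The paper instead runs Scheiderer's Propositions 4.17--4.19 uniformly in all cases; your reduction buys simplicity for the ``new'' indices at each degree $d\geq 4$, at the cost of not covering the base cases at all.

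For part (1) and for $(3,6)$ --- precisely the inductive base cases needed for Theorem \ref{thm:spectrahedralshadows} --- there is a genuine gap. You correctly locate the problem (apply Scheiderer's obstruction to $\mathcal{P}_{n+1,2d}\cap\mathfrak{S}_{n+i}=C_i\cap\mathfrak{S}_{n+i}$, based at a PSD-extremal non-SOS circuit form in $\mathfrak{S}_{n+i}$ supplied by Corollary \ref{cor:specialf}), but you leave the decisive step as ``re-examine Scheiderer's construction'' and ``check dimension counts''. That is not where the difficulty lies: \cite[Proposition 4.19]{Scheiderer} is already stated for an arbitrary finite-dimensional function space $L$ and is applied on the dual side, to $\overline{\mathrm{conv}(v'(\R^n))}$ for the evaluation map of the dehomogenized products $\{m_sm_t\mid 0\leq s,t\leq n+i\}$. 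The hypothesis that actually has to be verified is that $f(\epsilon,X'-\eta)$ lies in $L_B+B1$ for \emph{every} $\eta\in\R^n$; translating by $\eta$ and setting $X_0=\epsilon$ scatters each $e_j=m_sm_t$ into lower-order monomials that are a priori not spanned by the $e_j'$, and the paper's proof spends a three-case analysis showing that all monomials so produced are again among $\{e_1',\ldots,e_N'\}$ up to scalars in $B$. Without this verification, and without the subsequent dualization (the paper's Claims 2 and 3, passing from the moment body back to $C_i$ via Theorem \ref{thm:suffCi} and biduality), the base cases remain unproved. A minor further inaccuracy: when a coordinate copy of $\mathcal{F}_{4,4}$ fits inside $\mathfrak{S}_{n+i}$, the section $\mathcal{P}_{n+1,4}\cap\mathfrak{S}_{n+i}$ does not ``literally equal'' $\mathcal{P}_{4,4}$; one must cut down further to forms supported on four variables to obtain a section isomorphic to it.
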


\begin{proof} Set $\{e_0,\ldots,e_N\}:=\{m_sm_t \mid 0 \leq s,t \leq n+i\}$ ($N\in\N$) where $e_0(X):=X_0^{2d}$, $X^\prime:=(X_1,\ldots,X_n)$ and $e_j^\prime(X^\prime):=e_j(1,X^\prime)$ for $j=1,\ldots,N$. We introduce the maps
	$$\begin{array}{rrccc} v&\colon&\C^{n+1}&\to&\C^{N+1} \\
							  &&x&\mapsto&(e_0(x),\ldots,e_N(x)) \\
							  v^\prime &\colon&\C^n&\to&\C^N \\
							  &&x^\prime&\mapsto&(e_1^\prime(x^\prime),\ldots,e_N^\prime(x^\prime))
	\end{array}$$
    and verify the assertion in three steps: \\

     \noindent \underline{Claim 1}: $\overline{\mathrm{conv}(v^\prime(\R^n))}$ is not a spectrahedral shadow \\
     \noindent Fix $f\in\Delta_{n+1,2d}\cap\mathfrak{S}_{n+i}$. Such a choice can be made by Corollary \ref{cor:specialf}. Let $L$ be the finite-dimensional linear space spanned by $\{e_1^\prime,\ldots,e_n^\prime\}$ in $\R[X^\prime]$ and $R\supsetneq\R$ a real closed field extension with canonical valuation ring $B$ and maximal ideal $\mathfrak{m}_B$. Set $\epsilon>0$ to be an infinitesimal element in $R$ and $\eta\in \R^n$. Then $f(\epsilon,X^\prime)\in B[X^\prime]$ is not SOS in $B[X^\prime]/\langle X_1,\ldots,X_n\rangle^{2d+1}B[X^\prime]$ by \cite[Proposition 4.18]{Scheiderer}. By \cite[Lemma 4.17]{Scheiderer}, $f_\eta(X^\prime):=f(\epsilon,X^\prime-\eta)$ is consequently not SOS in $B[\C^n]/(M_{\C^n,\eta})^{2d+1}$ where $M_{\C^n,\eta}\subseteq B[\C^n]$ is the kernel of the evaluation map $B[\C^n]\to B, g\mapsto g(\eta)$. 
 
     Observe that since $f$ is PSD on $\R^{n+1}$, $f(X_0,X_1-\eta_1,\ldots,X_n-\eta_n)$ is PSD on $\R^{n+1}$ and with that on $R^{n+1}$. Set $L_B:=L\otimes B$ which can be identified with the span of $\{e_1^\prime,\ldots,e_N^\prime\}$ in $B[X^\prime]$. If $f_\eta\in L_B+B1$, then $\overline{\mathrm{conv}(v^\prime(\R^n))}$ fails to be a spectrahedral shadow according to \cite[Proposition 4.19]{Scheiderer}. Thus, it suffices to show $f_\eta\in L_B+B1$: \\
    \noindent Since $i\geq k(n,d-1)-n+1$ (in both cases), all non-constant monomials in $X^\prime$ of degree at most $2d-2$ are among $e^\prime_1,\ldots,e^\prime_N$. Also, $f$ lies in the $\R$-span of $\{e_0,\ldots,e_N\}$. Hence, $f_\eta$ lies in the $\R$-span of $\{e_0(\epsilon,X^\prime-\eta),\ldots,e_N(\epsilon,X^\prime-\eta)\}$. Let $j=0,\ldots,N$ and distinguish three cases: \\
     \noindent\underline{Case 1}: If $X_0^2$ divides $e_j$, then $e_j(\epsilon,X^\prime-\eta)$ is a linear combination of monomials in $X^\prime$ of degree at most $2d-2$ with scalars in $B$ (coming from products of $\epsilon,\eta_1,\ldots,\eta_n\in B$). It follows $e_j(\epsilon,X^\prime-\eta)\in L_B+B1$. \\
    \noindent \underline{Case 2}: If $e_j(X)=X_0(X^\prime)^\beta$ for some $\beta\in\N_0^{n}$ with $\vert \beta \vert =2d-1$, then we conclude that $e_j(\epsilon,X^\prime-\eta)=\epsilon(X^\prime-\eta)^\beta$ is a linear combination of monomials in $X^\prime$ of degree at most $2d-2$ with scalars in $B$ (coming from products of $\epsilon,\eta_1,\ldots,\eta_n\in B$) and $(X^\prime)^\beta=e_j(1,X^\prime)=e_j^\prime(X^\prime)$ with scalar $\epsilon\in B$. Thus, $e_j(\epsilon,X^\prime-\eta)\in L_B+B1$. \\
    \noindent \underline{Case 3}: If $X_0$ does not divide $e_j$, then $e_j=m_sm_t$ for some $0\leq s,t\leq n+i$ with $\alpha_{s,0}=\alpha_{t,0}=0$. So, $e_j(\epsilon,X^\prime-\eta)=\epsilon(X^\prime-\eta)^\beta$ is a linear combination of monomials in $X^\prime$ of degree at most $2d-2$ with scalars in $B$ (coming from products of $\epsilon,\eta_1,\ldots,\eta_n\in B$), $e_j$ with scalar $\epsilon\in B$ and monomials in $X^\prime$ of degree at most $2d-1$ with scalars in $B$ (coming from poducts of $\epsilon,\eta_1,\ldots,\eta_n\in B$) that are of the type $(X^\prime)^{\alpha_s^\prime+\alpha^\prime_t-\chi_l}$ for appropriate $\alpha_s^\prime:=(\alpha_{s,1},\ldots,\alpha_{s,n})$, $\alpha_{t}^\prime:=(\alpha_{t,1},\ldots,\alpha_{t,n})$ and $1\leq l \leq n$ such that $(\alpha_s^\prime+\alpha_t^\prime)_l\geq 1$, where $\chi_l$ denotes the $l^{th}$ unit vector in $\Z^n$. W.l.o.g.\ assume $\alpha_{s,l}^\prime\geq 1$ and set $m(X):=X_0(X^\prime)^{\alpha_s^\prime-\chi_l}\geq_{lex}m_{n+i}(X)$. Also, $m_t\geq_{lex} m_{n+i}$ by choice and, so, $X_0(X^\prime)^{\alpha_s^\prime+\alpha^\prime_t-\chi_l}=m(X)m_t(X)\in\{e_1,\ldots,e_N\}$. Hence, we conclude that $(X^\prime)^{\alpha_s^\prime+\alpha^\prime_t-\chi_l}=m(1,X^\prime)m_t(1,X^\prime)\in\{e_1^\prime,\ldots,e_N^\prime\}$. Altogether, $e_j(\epsilon,X^\prime-\eta)\in L_B+B1$. \\
 
    \noindent \underline{Claim 2}: $\overline{\mathrm{conv}(v(\R^{n+1}))}$ is not a spectrahedral shadow: \\
    \noindent Assume for a contradiction that $\overline{\mathrm{conv}(v(\R^{n+1}))}$ is a spectrahedral shadow and consider the affine hyperplane $H:=\{z\in\R^{N+1}\mid z_0=1\}$. Since $H$ is also a spectrahedral shadow, $\overline{\mathrm{conv}(v(\R^{n+1}))}\cap H$ is a spectrahedral shadow. Furthermore, $\overline{\mathrm{conv}(v(\R^{n+1}))}\cap H=\overline{\mathrm{conv}(v(\{1\}\times \R^{n}))}$ and, thus, $\overline{\mathrm{conv}(v(\{1\}\times \R^{n}))}$ is a spectrahedral shadow. Consequently, it follows for the linear projection $\Pi\colon\R^{N+1}\to\R^N, (z_0,z^\prime)\mapsto z^\prime$ that $\Pi(\overline{\mathrm{conv}(v(\{1\}\times \R^{n}))})=\overline{\mathrm{conv}(v^\prime(\R^{n}))}$ is a spectrahedral shadow. A contradiction to the first (verified) claim. \\
    
    \noindent \underline{Claim 3}: $C_i$ is not a spectrahedral shadow: \\
    \noindent Assume for a contradiction that $C_i$ is a spectrahedral shadow. Recall from Theorem \ref{thm:suffCi} that $\mathcal{P}_{n+1,2d}\cap\mathfrak{S}_{n+i}=C_{i}\cap\mathfrak{S}_{n+i}$, which especially is a spectrahedral shadow as the intersection of two spectrahedral shadows. However, $\mathcal{P}_{n+1,2d}\cap\mathfrak{S}_{n+i}$ is isomorphic to the dual cone $(\overline{\mathrm{conv}(v(\R^{n+1}))})^\vee$. Hence, $(\overline{\mathrm{conv}(v(\R^{n+1}))})^\vee$, and with that also the bidual cone $\overline{\mathrm{conv}(v(\R^{n+1}))})$, is a spectrahedral shadow. A contradiction to the second (verified) claim.
    \end{proof}

\begin{thm} \label{thm:notssjump} Let $n,d\geq 1$ and $i=0,\ldots,k-n$. If $C_i$ is not a spectrahedral shadow as a subcone of $\mathcal{P}_{n+1,2d}$, then $C_i$ is not a spectrahedral shadow as a subcone of $\mathcal{P}_{n+1,2\delta}$ for $\delta\geq d$.
\end{thm}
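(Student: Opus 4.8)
The plan is to reduce the degree-$2\delta$ situation to the degree-$2d$ situation by intersecting $C_i \subseteq \mathcal{P}_{n+1,2\delta}$ with the linear subspace $X_0^{2(\delta-d)}\cdot\mathfrak{S}_{n+i}^{(d)}$, where $\mathfrak{S}_{n+i}^{(d)}$ denotes the span of the products $m_sm_t$ ($0\le s,t\le n+i$) in degree $2d$. First I would make precise the natural linear isomorphism $\iota\colon \mathcal{F}_{n+1,2d}\to X_0^{2(\delta-d)}\mathcal{F}_{n+1,2d}\subseteq\mathcal{F}_{n+1,2\delta}$, $g\mapsto X_0^{2(\delta-d)}g$, and observe that $\iota$ restricts to an isomorphism of $\mathfrak{S}_{n+i}$ (in degree $2d$) onto a linear subspace of $\mathfrak{S}_{n+i}$ in degree $2\delta$ — here one must be slightly careful because the index $n+i$ refers to a lexicographic position that depends on the degree, but since $X_0^{2(\delta-d)}m_sm_t$ always has first exponent $\ge 2(\delta-d)$, the monomials involved sit at the bottom of the lexicographic order and the containment is immediate. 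The key point is that $\iota$ maps $\mathcal{P}_{n+1,2d}$ onto $\iota(\mathcal{F}_{n+1,2d})\cap\mathcal{P}_{n+1,2\delta}$ (a form $X_0^{2(\delta-d)}g$ is PSD iff $g$ is PSD, since $X_0^{2(\delta-d)}\ge 0$ and vanishes only on a measure-zero set) and, similarly, $\iota$ maps $C_i$ in degree $2d$ onto $\iota(\mathcal{F}_{n+1,2d})\cap C_i$ in degree $2\delta$.

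The crux is this last claim about $C_i$. For the direction $\iota(C_i^{(d)})\subseteq C_i^{(2\delta)}$: given a Gram matrix $A$ (of size $k(n,d)+1$) for $g$ with $q_A|_{V_i^{(d)}(\R)}\ge 0$, one builds a Gram matrix for $X_0^{2(\delta-d)}g$ in the larger $\mathrm{Sym}_{k(n,\delta)+1}(\R)$ supported on the index block corresponding to the monomials $X_0^{\delta-d}m_s$, and checks via Theorem \ref{cor:extendingCi} (pointwise on $\phi(K_i)(\R)$, using the concrete affine parametrization) that the resulting quadratic form is locally PSD on $V_i^{(2\delta)}(\R)$. For the reverse inclusion: if $X_0^{2(\delta-d)}g\in C_i^{(2\delta)}$, pick a Gram matrix $B$ with $q_B$ locally PSD on $V_i^{(2\delta)}(\R)$; restricting to points of $\phi(K_i)(\R)$ coming from $x$ with $x_0\ne 0$ and using the substitution/homogeneity argument one extracts from $q_B$ a quadratic form on the smaller space witnessing $g\in C_i^{(d)}$ — essentially, $q_B$ evaluated along the Veronese-type points with $z_0=x_0^{\delta}$ factors as $x_0^{2(\delta-d)}$ times a quadratic form in the degree-$d$ Veronese coordinates, and the inequality $q_B\ge0$ on $V_i^{(2\delta)}(\R)$ forces that smaller quadratic form to be $\ge 0$ on $V_i^{(d)}(\R)$ by continuity. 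This mirrors the coefficient-chasing already carried out in the proof of Theorem \ref{thm:exclusion} and in Proposition \ref{prop:circuitsatisfysuff}, so I would present it in that style rather than re-deriving everything.

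Granting the isomorphism statement, the theorem follows formally: suppose $C_i$ as a subcone of $\mathcal{P}_{n+1,2\delta}$ were a spectrahedral shadow. The affine subspace $\iota(\mathcal{F}_{n+1,2d})$ is (trivially) a spectrahedral shadow, hence so is the intersection $C_i^{(2\delta)}\cap\iota(\mathcal{F}_{n+1,2d})$; pulling back along the linear isomorphism $\iota^{-1}$ (linear maps preserve the spectrahedral-shadow property) shows that $C_i$ as a subcone of $\mathcal{P}_{n+1,2d}$ is a spectrahedral shadow — contradicting the hypothesis. I expect the main obstacle to be the careful bookkeeping in the reverse inclusion $\iota^{-1}\big(C_i^{(2\delta)}\cap\iota(\mathcal{F}_{n+1,2d})\big)\subseteq C_i^{(d)}$: one must show that membership of $X_0^{2(\delta-d)}g$ in the larger cone genuinely certifies $g$ in the smaller cone, which requires knowing that the "extra" Gram freedom in the larger matrix cannot help — this is exactly the kind of vanishing-of-off-diagonal-blocks argument used in Theorem \ref{thm:exclusion}, adapted to the fact that we only control $q_B$ on $V_i^{(2\delta)}(\R)$ rather than on all of $\PP^{k}(\R)$.
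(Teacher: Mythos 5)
Your proposal follows essentially the same route as the paper: by contraposition, intersect the degree-$2\delta$ cone with the subspace $X_0^{2(\delta-d)}\mathcal{F}_{n+1,2d}$ of forms divisible by $X_0^{2(\delta-d)}$ (the paper's $\mathfrak{S}$) and transport along the linear isomorphism given by dividing out $X_0^{2(\delta-d)}$, using closure of spectrahedral shadows under intersections and linear images. The crux identity $\pi\bigl(C_i^{(2\delta)}\cap\mathfrak{S}\bigr)=C_i^{(d)}$, which you rightly flag as the real content and sketch via Gram matrices, is exactly what the paper asserts (relying on the compatibility result \cite[Theorem B]{GHK} for multiplication by $X_0^2$).
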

    \begin{proof} Recall from Theorem \ref{cor:extendingCi} that $C_i=C_{\phi(K_i)}$ as a subcone of $\mathcal{P}_{n+1,2d}$ and $\mathcal{P}_{n+1,2\delta}$, respectively. For a proof by contraposition, we assume that $C_{\phi(K_i)}$ is a spectrahedral shadow as a subcone of $\mathcal{P}_{n+1,2\delta}$ and set $\mathfrak{S}$ to be the $\R$-span of $\{X^\alpha \mid \alpha\in\Z_0^{n+1} \colon \vert\alpha\vert = 2\delta \wedge \alpha_0\geq 2\delta-2d \}$. It follows that $C_{\phi(K_i)}\cap\mathfrak{S}$ is a spectrahedral shadow as the intersection of two spectrahedral shadows. Consequently, the image of $C_{\phi(K_i)}\cap\mathfrak{S}$ under the linear map $$\begin{array}{rrccc} \pi &\colon& \mathfrak{S} &\to & \mathcal{F}_{n+1,2d} \\
			&& f(X)=X_0^{2(\delta-d)}f^\prime(X) & \mapsto & f^\prime,
		\end{array}$$ that is $\pi(C_{\phi(K_i)}\cap\mathfrak{S})=C_{\phi(K_i)}$, is a spectrahedral shadow.  
  \end{proof}

\begin{proof}[\textbf{Proof of Theorem \ref{thm:spectrahedralshadows}}]
    We verify the assertion by an induction on $d$. The non-Hilbert cases to be considered as the inductive base cases are $(n+1,4)_{n\geq 3}$ and $(3,6)$. Theorem \ref{thm:notss} especially validates the assertion for these inductive base cases. Hence, we now assume that the assertion was already verified up to some $d$ and investigate the situation for $d+1$.

    By the inductive assumption, for $i=n+2,\ldots,k(n,d)-n$, the convex cone $C_{i}$ is not a spectrahedral shadow as a subcone of $\mathcal{P}_{n+1,2d}$. Thus, for $i=n+2,\ldots,k(n,d)-n$, the convex cone $C_{i}$ is not a spectrahedral shadow as a subcone of $\mathcal{P}_{n+1,2d+2}$ by Theorem \ref{thm:notssjump}. The same is true for $C_{n+1}$ when $n\geq 3$. It remains to show that, for $i=k(n,d)-n+1,\ldots,k(n,d+1)-n$, the convex cone $C_{i}$ is not a spectrahedral shadow as subcones of $\mathcal{P}_{n+1,2d+2}$. This, however, is true by Theorem \ref{thm:notss} (2). 
\end{proof}

\section*{Acknowledgements} 
  The first author acknowledges the support through \textit{Oberwolfach Leibniz Fellows} program. The second author is grateful for the support provided by the scholarship program of the University of Konstanz under the \textit{Landesgraduiertenfördergesetz} and the \textit{Studienstiftung des deutschen Volkes}.
  The third author acknowledges the support of \textit{Ausschuss für Forschungsfragen der Universität Konstanz}.

\end{document}